\numberwithin{equation}{section}
\newtheorem{theorem}{Theorem}[section]
\newtheorem{lemma}[theorem]{Lemma}
\newtheorem{corollary}[theorem]{Corollary}
\def\begeq{\begin{equation}}
\def\endeq{\end{equation}}
\title[Gradient estimates for $\Delta_pu+\Delta_qu+h(u,|\nabla u|^2)=0$]{Liouville type theorems for some $(p,q)$-Laplace equations with gradient dependent reaction on Riemannian manifolds}
\author{Youde Wang$^\S$}
\thanks{$\S\,$ Corresponding author: 1. School of Mathematics and Information Sciences, Guangzhou University;
		2. State Key Laboratory of Mathematical Sciences (SKLMS), Academy of Mathematics and Systems Science, and School of
		Mathematical Sciences, UCAS, Beijing 100190, China.
Email: wyd@math.ac.cn}
\author{Liqin Zhang$^\dag$}
\thanks{$\ddag\,$ School of Mathematics and Information Science,
Guangzhou University, Guangzhou 510006, P. R. China.
Email: 1061837643@qq.com}
\begin{document}
\maketitle
\rm\begin{abstract}\vspace{-2.5em}
{In this paper, we combine Bochner formula, Saloff-Coste's Sobolev inequality and the Nash-Moser iteration method to study the local and global behaviors of solutions to the nonlinear elliptic equation $\Delta_pu+\Delta_qu+h(u,|\nabla u|^2)=0$ defined on a complete Riemannian manifold $\left(M,g\right)$, where $q\ge p>1$, $h\in C^1(\mathbb{R}\times\mathbb{R}^{+})$ and $\Delta_z u=\mathrm{div}\left(\left|\nabla u\right|^{z-2}\nabla u\right)$, with $z\in\{ p,~q\}$, is the usual $z$-Laplace operator. Under some assumptions on $p$, $q$ and $h(x,y)$, we derive concise gradient estimates for solutions to the above equation and then obtain some Liouville type theorems. In particular, we use integral estimate method to show that, if $u$ is a non-negative entire solution to $\Delta_p u +\Delta_q u=0$ ($n\le p\le q$) on a complete non-compact Riemannian manifold $M$ with non-negative Ricci curvature and $\dim M = n\ge2$, then $u$ is a trivial constant solution.}
\end{abstract}

{\emph{Key words: Gradient estimate; Nash-Moser iteration; Liouville type theorem}}
\medskip

MSC[2020] Primary 58J05, 35B53, 35J92; Secondary 35B45, 35B08, 35B09.

\textbf{\tableofcontents}

\section{\textbf{Introduction}}
\noindent Gradient estimate is a fundamental technique in the study of partial differential equations on a Riemannian manifold. Indeed, one can use gradient estimate to deduce Liouville type theorems (\cite{NS25, Yau, 4}), to derive Harnack inequalities (\cite{SW25, Yau, 4}), to study the geometry of manifolds (\cite{7, 4}), etc.

In this paper, we are concerned with the following $(p, q)$-equation defined on a complete Riemannian manifold $\left(M,g\right)$ equipped with a metric $g$
\begin{align}\label{1}
\Delta_pu+\Delta_qu+h(u,|\nabla u|^2)=0,
\end{align}
where $q\ge p>1$, $h\in C^1(\mathbb{R}\times\mathbb{R}^{+})$ and $\Delta_zu=\mathrm{div}\left(\left|\nabla u\right|^{z-2}\nabla u\right)$, with $z\in\{ p,~q\}$, is the usual $z$-Laplace operator. This equation is a degenerate quasilinear elliptic equation and includes some well-known equations, for instance, Hamilton-Jacobi equation (see \cite{Lions}).

\subsection{History and motivations}\

Since the content of this paper is closely concerned with double phase problems, we start with a short description on the background. The double-phase problem \eqref{1} is motivated by numerous models arising in mathematical physics. For instance, we can refer to Born-Infeld equation \cite{10} that appears in electromagnetism:
\begin{center}
$
-\mathrm{div} \left(\dfrac{\nabla u}{\sqrt{1-2\left|\nabla u\right|^2}}\right)=h(u)\quad \mbox{in}~\Omega,
$
\end{center}
where $\Omega\subset{\mathbb R}^n$ is a bounded domain with $C^2$-boundary $\partial\Omega$. Indeed, by the Taylor formula, we have
\begin{center}
$
\dfrac{1}{\sqrt{1-2x}}=1+x+\dfrac{3}{2}x^2+\cdots+\dfrac{(2m-3)!!}{(m-1)!}x^{m-1}+\cdots.
$
\end{center}
Taking $x=\left|\nabla u\right|^2$ and adopting the first order approximation, we obtain problem \eqref{1} for $p=2$ and $q=4$. Furthermore, the $m$-th order approximation problem is driven by the multi-phase differential operator
\begin{center}
$
-\Delta u-\Delta_4u-\dfrac{3}{2}\Delta_6u-\cdots-\dfrac{(2m-3)!!}{(m-1)!}\Delta_{2m}u.
$
\end{center}

Besides, the operator $\Delta_p + \Delta_q$ is also directly related to the following functionals:
$$\mathcal{F}(u)=\int_\Omega(a(x)|\nabla u|^p + |\nabla u|^q)dx,\quad u\in W^{1,\max\{p,q\}}(\Omega),$$
if $a(x)\equiv 1$. Such functionals $\mathcal{F}(u)$ were first studied by Marcellini \cite{Marc89, Marc91} and Zhikov \cite{Zhi87, Zhi95}, in the context of problems of the calculus of variations and of nonlinear elasticity theory, including the Lavrentiev gap phenomenon. The correponding
operators are used to describe diffusion-type processes, where certain subdomains are distinguished from others. For instance, this structure is used to describe a composite material having on $\{x\in \Omega: a(x) =0\}$ an energy density with $q$-growth, but on $\{x\in \Omega: a(x)>0\}$ the energy density has $p$-growth.
\medskip

Now let us recall some previous results which are closely related with our paper. Bobkov and Tanaka \cite{Bob} they also dealt with the study of positive solutions to the $(p, q)$-Laplace equation
\begin{align}\label{new}
-\Delta_ pu - \Delta_qu = \alpha_1 |u|^{p-2}u + \alpha_2 |u|^{q-2}u\quad\mbox{in}\,\,\Omega\subset\mathbb{R}^n,
\end{align}
subject to the Dirichlet boundary condition $u=0$ on $\partial\Omega$. They gave a complete description of two-dimensional sets in the $(\alpha_1,
\alpha_2)$-plane that correspond to the existence and the non-existence of positive solutions to the above problem. Also, in \cite{Bob1} they studied recently the multiplicity of positive solutions for $(p,q)$-Laplace equations.

B. Alreshidi, D. D. Hai and R. Shivaji in \cite{12} studied the existence of a positive solution to the $(p,q)$-Laplacian equation
\begin{center}
$
\begin{cases}
-\Delta_pu-\Delta_qu=\lambda_0 h_0(u),&\mbox{in}~\Omega,\\[3mm]
u=0,&\mbox{on}~\partial\Omega,
\end{cases}
$
\end{center}
where $\Omega$ is a bounded domain in ${\mathbb R}^n$ with smooth boundary $\partial\Omega$, $h_0:(0,+\infty)\longrightarrow\mathbb R$ is continuous, $p$-sublinear at $\infty$ and is allowed to be singular at $0$, and $\lambda_0>0$ is a large parameter. S. El Manouni, K. Perera and P. Winkert \cite{ElPW} studied recently the first eigenvalue of the $(p, q)$-Laplacian and some related problems.

Meanwhile, for the case the definition domain of $(p, q)$-Laplace equation is an Euclidean space $\mathbb{R}^n$ or a Riemannian manifold, V. Ambrosio \cite{A} consider the nonlinear $(p, q)$-Schr\"odinger equation
\begin{center}
$
\begin{cases}
-\epsilon^p\Delta_pu -\epsilon^q\Delta_qu + V(x)(|u|^{p-2}u + |u|^{q-2}u)=f(u), \quad x\in \mathbb{R}^n\\
0 <u\in W^{1,q}(\mathbb{R}^n)\cap W^{1,p}(\mathbb{R}^n)
\end{cases}
$
\end{center}
where $\epsilon> 0$ is a small parameter, $n\geq 3$, $1 < p < q < n$, $V$ is a continuous potential with $V_0 = \inf_{x\in\Lambda} V <
\min_{x\in\partial\Lambda}V$ for some bounded open set $\Lambda\in\mathbb{R}^n$, and $f$ is a continuous subcritical Berestycki-Lions type nonlinearity with $f(t) = 0$ for $t\leq0$. Using variational arguments, the author obtains a solution $u_\epsilon$ for the $(p,q)$-Schr\"odinger equation and studies the concentration of $u_\epsilon$ as $\epsilon\to 0$. Moreover, for  $\epsilon>0$ small,  by employing certain uniform estimates for the solutions of the modified problems, the author proves that the solutions constructed for the modified problems give rise to solutions of the original problem.

Also, V. Ambrosio \cite{A1} dealt with the following class of $(p, q)$-Laplacian problems:
$$-\Delta_pu-\Delta_qu = g(u) \quad\mbox{in}\,\,\mathbb{R}^n$$
with $u \in W^{1,p}(\mathbb{R}^n)\cap W^{1,q}(\mathbb{R}^n)$, where $n \geq 2$, $1 < p < q \leq n$, and $g: \mathbb{R} \to \mathbb{R}$ is a Berestycki-Lions type nonlinearity. Using appropriate variational arguments, he obtained the existence of a ground state solution and proved the existence of infinitely many radially symmetric solutions.

Very recently, the authors of this paper in \cite{WZ} discussed the gradient estimates of solutions to the following equation of the same type as above \eqref{new}, i.e.
$$-\Delta_ pu - \Delta_qu = a|u|^{s-1}u + b|u|^{l-1}u$$
defined on a complete noncompact Riemannian manifold with Ricci curvature bounded from below, where $q>p>1$, $a$, $b$, $s$ and $l$ are constants, and then derive some Liouville theorem. In particular, it is proved in \cite{WZ} that, if $u$ is a non-negative $(p, q)$-harmonic function on $M$, i.e., a non-negative entire solution to $\Delta_pu +\Delta_qu =0$ ($1<p<q$) on a complete non-compact Riemannian manifold $M$ with non-negative Ricci curvature and $\dim M = n \geq 3$, and $f = |\nabla u|^2 \in L^\beta(M)$, where $\beta> \delta(n, p, q)>0$, then $u$ is a trivial constant solution.

Later on, Yang and the authors of this paper in \cite{WYZh} shew that, if an entire positive solution $u$ to $\Delta_p u +\Delta_q u=0$ ($1<p\le q$) on a complete non-compact Riemannian manifold $M$ with non-negative Ricci curvature and $\dim M = n\ge3$ satisfies
\begin{align*}
\lim_{M\ni x\to\infty}\frac{u(x)}{d(x_0,x)}=0
\end{align*}
for some $x_0\in M$, where $d(x_0, x)$ denotes the distance between $x_0$ and $x$, then $u$ is a constant.

Shen and Zhu in \cite{Shen-Zhu} employed the methods in \cite{HWW} to prove the Liouville theorem for positive $(p, q)$-harmonic functions, provided
$$\frac{4(p-1)(q-1)}{(p-q)^2} > (n-1).$$

Besides, let us also mention that several studies have been devoted recently to the investigation of related problems and a lot of papers have appeared dealing with problems involving $(p,q)$-Laplacian in both bounded and unbounded domains. For the references therein, see e.g. Ambrosio et al \cite{AR},  Baldelli et al \cite{BBF1, BBF2}, Baldelli and Filippucci \cite{BF}, Cherfils and Il'yasov \cite{CI}, C. He and G. Li \cite{HL}. For more results, we refer to \cite{Mara} where a short account of recent existence and multiplicity theorems on the Dirichlet problem for an elliptic equation with $(p, q)$-Laplacian in a bounded domain is performed.
\medskip

For anisotropic $(p, q)$-Laplacian associated with functional $\mathcal{F}(u)$ mentioned above, Liu and N.S. Papageorgiou \cite{LP} considered a parametric nonlinear Dirichlet problem driven by the double phase differential operator and a reaction that has the competing effects of parametric ``concave" term and of a ``convex" perturbation (concave-convex problem), i.e.
$$-\Delta_p^au-\Delta_qu= \lambda|u|^{\tau-2}u+f(x,u)\quad\mbox{in}\,\,\Omega$$
with Dirichlet boundary value $u=0$ on $\partial\Omega$. Given $a\in C(\Omega\setminus\{0\})$ with $a(x) \geq 0$ for all $x\in\bar{\Omega}$, by $\Delta^a_p$ we denote the weighted $p$-Laplacian with weight $a(x)$, which is defined by
 $$\Delta_p^au = \mathrm{div}(a(x)|\nabla u|^{p-2}\nabla u).$$
Using variational tools together with truncation and comparison techniques and critical groups, we show that for all small values of the parameter, the problem has at least three nontrivial bounded solutions. More multiplicity results for double phase boundary value problems can be found in the works \cite{LP22, PPR22} and references therein. A detailed account of the progress made so far, can be found in the papers of Mingione-Radulescu \cite{MR21} and Ragusa-Tachikawa \cite{MT19}.
\medskip

On the other hand, when $p=q$ the equation \eqref{1} reduces to the following
$$\Delta_pu+\frac{1}{2}h(u,|\nabla u|^2)=0,$$
which includes well-known Lane-Emden equation and Hamilton-Jacobi equation, and has already been the subject of countless publications. One has made great progress on the local and global properties for solutions to this equation (see \cite{BGVeron, CGS, CM, GS, HHuW, HW, Lions, SerZ, Yau}). In particular, He, Wang and Wei \cite{HWW} proved recently any positive solution $u\in C^1(B_R(o))$ to the following equation $$\Delta_pu +u^l=0$$
with $$l\in\left(-\infty,~\frac{(n+3)(p-1)}{n-1}\right)$$
defined on a geodesic ball $B_R(o)$ of a complete Riemannian manifold $(M,g)$ with $\mathrm{Ric}_g\geq-(n-1)\kappa g$ satisfies
\begin{align*}
\sup_{B_{R/2}(o)} \frac{|\nabla u|}{u}  \leq C(n,p,l)\frac{1+\sqrt{\kappa}R}{R}.
\end{align*}
Shortly later, Han, He and Wang in \cite{HHW, HHW1} proved any solution $u\in C^1(B_R(o))$ to the following viscous Hamilton-Jacobi equation
$$\Delta_pu - |\nabla u|^r=0$$
with $r>p-1$ defined on a geodesic ball $B_R(o)$ of a complete Riemannian manifold $(M,g)$ with $\mathrm{Ric}_g\geq-(n-1)\kappa g$ satisfies
\begin{align*}
\sup_{B_{R/2}(o)}|\nabla u|  \leq C(n,p,r)\left(\frac{1+\sqrt{\kappa}R}{R}\right)^{\frac{1}{r-p+1}},
\end{align*}
whic improves the conclusions in \cite{BGVeron0, Lions}.
\medskip

Now, we go back to $(p,q)$-Laplace equation. If $h(u, |\nabla|^2) = a|\nabla u|^r$, where $a$ is a constant, \eqref{1} reduces to the following equation
\begin{align}\label{1.3}
\Delta_pu+\Delta_qu+a|\nabla u|^r=0,
\end{align}
which can be regarded as a natural generalization of Hamilton-Jacobi equation mentioned above. In the case $a=0$ in \eqref{1.3}, we call \eqref{1.3} as $(p,q)$-harmonic function equation.

For the above equation \eqref{1.3} we would like to ask the following two problems:
\begin{enumerate}
\item {\em A fundamental and important problem is whether or not the Liouville theorem for $(p, q)$-harmonic functions on a complete non-compact Riemannian manifold $M$ with non-negative Ricci curvature also holds true?}

\item {\em Another natural problem is whether or not similar gradient estimates for solutions to the above equation \eqref{1.3} with that of Hamilton-Jacobi equation hold true?}
\end{enumerate}

For the above problem (1), the authors of this paper do not know any result on $(p, q)$-harmonic functions except for \cite{Shen-Zhu, WZ, WYZh}, provided $p\neq q$. For the second problem, by the best knowledge of authors it seems that there is little literature.

Our goal of this paper is to answer partially the above two problems. For this end, we need to overcome some new difficulties and employ some new techniques because of $(p, q)$-Laplace operator is more complicated than $p$-Laplace operator.

\subsection{Main results}\

By a solution $u$ of \eqref{1} in an (arbitrary) domain $\Omega$, we mean a solution $u\in C^1(\Omega)\cap C^3(\widetilde{\Omega})$, where $\widetilde{\Omega}=\left\{x\in\Omega:~|\nabla u(x)|\neq 0 \right\}$. It is worth mentioning that, if the coefficients of \eqref{1} satisfy some suitable conditions, it is well-known that any solution of \eqref{1} satisfies $u\in C^{1,\alpha}(\Omega)$ for some $\alpha\in(0,1)$ (for example, see \cite{16, HMW, 17, 18}).

Now we are in the position to state our main results. Firstly, we deal with the nonnegative solutions to quasilinear elliptic differential inequality defined on a manifold $(M, g)$ with $\dim M=n\ge2$. Inspired by \cite{CM} we employ an integral estimate method to prove the Liouville theorem for $(p,q)$-harmonic function as follows.

\begin{theorem}\label{theorem1.1}
Let $(M,~g)$ be a complete noncompact Riemannian manifold with nonnegative Ricci curvature. If $u\in C^1(M)$ satisfies
\begin{align}\label{1.2}
\begin{split}
\begin{cases}
\Delta_p u+\Delta_q u\le0,&in~ M,\\[3mm]
u\ge0,& in~ M,\\
\end{cases}
\end{split}
\end{align}
where $n\le p\le q$, then $u$ is an nonnegative constant. In particular, if $u$ is a nonnegative solution to the following equation
\begin{align*}
\Delta_pu+\Delta_qu=0,
\end{align*}
then $u$ is a trivial constant solution.
\end{theorem}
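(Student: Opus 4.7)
The strategy is a Caccioppoli/integral-iteration argument tailored to the mixed $(p,q)$-operator. Fix $\epsilon > 0$, set $u_\epsilon = u + \epsilon$, let $\eta \in C^\infty_c(M)$ be a nonnegative cutoff, and fix an integer $s \geq q$. I would multiply the differential inequality by the test function $\phi = u_\epsilon^{1-q}\eta^s$, justifying the integration by parts either via the weak formulation or by regularizing $|\nabla u|^{p-2}$ and $|\nabla u|^{q-2}$ by $(|\nabla u|^2 + \mu^2)^{(p-2)/2}$ and $(|\nabla u|^2 + \mu^2)^{(q-2)/2}$ and sending $\mu \downarrow 0$. The resulting identity is
\[
(q-1)\int_M u_\epsilon^{-q}\eta^s(|\nabla u|^p + |\nabla u|^q) \leq s\int_M u_\epsilon^{1-q}\eta^{s-1}(|\nabla u|^{p-1} + |\nabla u|^{q-1})|\nabla \eta|.
\]
The reason for choosing the weight $u_\epsilon^{1-q}$, rather than the more classical $u_\epsilon^{1-p}$ used in $p$-harmonic Liouville theorems, is that after Young's inequality the exponent of $u_\epsilon$ on the right becomes $p-q \leq 0$, so it is bounded by $\epsilon^{p-q}$ independently of how large $u$ gets; this is what lets us dispense with any a priori boundedness assumption on $u$.

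Next I apply Young's inequality to each term on the right with conjugate pairs $(p/(p-1), p)$ and $(q/(q-1), q)$:
\[
u_\epsilon^{1-q}\eta^{s-1}|\nabla u|^{p-1}|\nabla\eta| \leq \delta\, u_\epsilon^{-q}\eta^s|\nabla u|^p + C(\delta)\, u_\epsilon^{p-q}\eta^{s-p}|\nabla\eta|^p,
\]
and analogously for the $q$-term with $\eta^{s-q}|\nabla\eta|^q$. Choosing $\delta$ small enough to absorb the gradient contributions into the left-hand side, and using $u_\epsilon^{p-q} \leq \epsilon^{p-q}$, yields the Caccioppoli-type estimate
\[
\int_M u_\epsilon^{-q}\eta^s(|\nabla u|^p + |\nabla u|^q) \leq C_\epsilon\Bigl(\int_M |\nabla\eta|^p + \int_M |\nabla\eta|^q\Bigr),
\]
with $C_\epsilon$ depending on $p,q,s,\epsilon$ but not on the cutoff $\eta$ or on $u$.

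To finish, I choose $\eta$ to drive the right-hand side to zero. When $p > n$, the usual cutoff $\eta = 1$ on $B_R(x_0)$, supported in $B_{2R}(x_0)$ with $|\nabla \eta| \leq C/R$, together with the Bishop--Gromov bound $\mathrm{vol}(B_{2R}) \leq CR^n$ (from $\mathrm{Ric}_g \geq 0$), gives $\int|\nabla\eta|^p + \int|\nabla\eta|^q \leq C(R^{n-p}+R^{n-q}) \to 0$. In the borderline case $p = n$ the polynomial cutoff fails, and I would instead use the logarithmic cutoff $\eta = 1$ on $B_R$, supported in $B_{R^2}$, with $|\nabla\eta| \leq C/(r\log R)$ on the annulus; then $\int|\nabla\eta|^n \leq C(\log R)^{1-n} \to 0$ and, since $q \geq n$, $\int|\nabla\eta|^q \to 0$ as well. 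Letting $R \to \infty$ forces $\int_M u_\epsilon^{-q}(|\nabla u|^p + |\nabla u|^q) = 0$, hence $|\nabla u|\equiv 0$ on $M$, and the $C^1$-regularity then gives that $u$ is a nonnegative constant, which specializes to the equation case.

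The main obstacle is the borderline case $p = n$, where the $R^{n-p}$ decay is lost and the logarithmic cutoff must be dovetailed with the mixed two-power estimate; a related technical point is justifying the integration by parts cleanly on the (possibly large) critical set $\{\nabla u = 0\}$, given only the $C^1$-regularity of $u$ on $M$.
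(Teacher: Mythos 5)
Your proof follows essentially the same integral-estimate strategy as the paper's: multiply the inequality by $(u+\text{const})^{1-q}\eta^{\text{power}}$, integrate by parts, apply Young's inequality so that the $u$-dependent factor $u_\epsilon^{p-q}$ (resp.\ $(u+1)^{p-q}$ in the paper) is bounded because $p\le q$, and then kill the cutoff terms using Bishop--Gromov volume comparison with $\mathrm{Ric}\ge 0$. The only cosmetic difference is that you split into $p>n$ (polynomial cutoff) and $p=n$ (logarithmic cutoff), whereas the paper uses the logarithmic cutoff $\Psi=\Phi(\log r_0/\log\rho)$ uniformly and shows $(\log\rho)^{-z}\int_{B_{\rho^2}\setminus B_\rho} r_0^{-z}\to 0$ for all $z\ge n$ in one computation, which is marginally cleaner but equivalent.
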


Secondly, inspired by \cite{HHW, HHW1, 7, WZ} we shall combine Bochner formula, Saloff-Coste's Sobolev inequality and the Nash-Moser iteration method to study the gradient estimate and the Liouville property of the above equation \eqref{1}, defined on a complete Riemannian manifold. We can give a gradient estimate of solutions to \eqref{1} under some assumptions on $h$, which can be regarded as an extension of results in \cite{HHW}.

\begin{theorem}\label{theorem1.2}
Let $(M,~g)$ be a complete Riemannian manifold with $\mathrm{Ric}\ge-(n-1)\kappa$ ($\kappa\ge0$, $n\ge3$), and let $u$ be a solution of equation \eqref{1} in $\Omega=B(o,R)\subset M$. Assume that $h\in C^1(\mathbb{R}\times\mathbb{R}^+)$ satisfies one of the following two conditions
\begin{enumerate}
\item $\frac{\partial h}{\partial x}(x,y)\le0$,\quad $\left(\frac{\partial h}{\partial y}(x,y)\right)^2\le\mu^2 y^{r-2}$ \quad\mbox{and}\quad $h^2(x,y)\ge\lambda y^r$;

\item $\frac{\partial h}{\partial x}(x,y)\le-\lambda y^{r-\frac{q}{2}}$ \quad\mbox{and}\quad $\left(\frac{\partial h}{\partial y}(x,y)\right)^2\le\mu^2 y^{r-2}$,
\end{enumerate}
where $r>q-1\geq p-1>0$, $\lambda>0$ and $\mu>0$. Then, for any given $\delta>0$ and $\theta>1$ there exists a positive constant $\widetilde{\mathcal{C}} =\widetilde{\mathcal{C}}(n,p,q,\lambda,r,\delta,\theta)$ such that
\begin{align*}
\sup_{B_{R/4}(o)}|\nabla u|^2\le\widetilde{\mathcal{C}}\left[\left(\frac{1+\kappa R^2}{R^2}\right)^\frac{1}{r-p+1}+\left(\frac{1+\kappa R^2}{R^2}\right)^\frac{\theta}{r-q+1}\right],
\end{align*}
where $R>\delta>0$. In particular, $\widetilde{\mathcal{C}}$ does not depend on $\delta$ if $R$ is large enough.
\end{theorem}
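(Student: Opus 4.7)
The plan is to follow the Bochner-formula + Saloff-Coste Sobolev inequality + Nash--Moser iteration strategy developed for the $p$-Laplace setting in Han--He--Wang \cite{HHW, HHW1} and adapted to the $(p,q)$-setting as in Wang--Zhang \cite{WZ}. Setting $f = |\nabla u|^2$ and working first on $\widetilde{\Omega} = \{|\nabla u|\neq 0\}$ where $u$ is $C^3$, I differentiate the equation $\Delta_p u + \Delta_q u + h(u,f) = 0$ and substitute into the Bochner formula
\[
\tfrac{1}{2}\Delta f = |\nabla^2 u|^2 + \langle \nabla u, \nabla \Delta u\rangle + \mathrm{Ric}(\nabla u, \nabla u).
\]
Using $\mathrm{Ric}\geq -(n-1)\kappa g$, a refined Kato-type inequality, and the sign/size hypotheses on $\partial_x h, \partial_y h$, this yields a differential inequality of the schematic form $\mathcal{L}_p f + \mathcal{L}_q f \geq c_1 f^{r/2} - c_2 \kappa(|\nabla u|^{p-2} + |\nabla u|^{q-2}) f - (\text{drift})\cdot\nabla f$, where $\mathcal{L}_z$ denotes the linearization of $\Delta_z$ at $u$. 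The crucial positive $f^{r/2}$ term comes from $h^2\geq \lambda f^r$ in case (1), or from $\partial_x h \leq -\lambda f^{r-q/2}$ contracted against $\nabla u$ in case (2), after absorbing the drift via weighted Cauchy--Schwarz using $(\partial_y h)^2 \leq \mu^2 f^{r-2}$.

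Next I multiply by $f^{s}\eta^2$ with $s$ large and $\eta$ a Lipschitz cutoff supported in $B_{R/2}(o)$, integrate by parts, and exploit the divergence-form structure of $\mathcal{L}_p,\mathcal{L}_q$ to obtain a Caccioppoli-type inequality
\[
\int_M \bigl|\nabla(f^{(s+1)/2}\eta)\bigr|^2 + \int_M f^{s+r/2}\eta^2 \leq C(s)\int_M f^{s+1}\bigl(|\nabla\eta|^2 + \kappa\eta^2\bigr).
\]
Degeneration at $\{|\nabla u|=0\}$ is handled by the standard regularization $f_\epsilon = |\nabla u|^2+\epsilon$ and passage $\epsilon\downarrow 0$ at the end. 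Saloff-Coste's local Sobolev inequality on $B_R(o)$, valid under the Ricci lower bound, then upgrades the left-hand gradient term to an $L^{2n/(n-2)}$-bound on $f^{(s+1)/2}\eta$. Moser iteration with exponents $s_k$ growing geometrically and nested cutoffs $\eta_k$ on balls shrinking to $B_{R/4}(o)$ gives
\[
\sup_{B_{R/4}(o)} f \leq C\left(R^{-n}\int_{B_{R/2}(o)} f^{s_0}\right)^{1/s_0}
\]
for some fixed starting exponent $s_0$. Applying the Caccioppoli inequality once more and invoking Bishop--Gromov volume comparison converts this $L^{s_0}$-mean into the asserted two-term bound $((1+\kappa R^2)/R^2)^{1/(r-p+1)} + ((1+\kappa R^2)/R^2)^{\theta/(r-q+1)}$, whose two summands reflect the two distinct scalings at which $\mathcal{L}_p$ and $\mathcal{L}_q$ close up the iteration.

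The main obstacle is the simultaneous presence of the two weights $|\nabla u|^{p-2}$ and $|\nabla u|^{q-2}$: because they scale differently with $f$, the $q$-component of the Caccioppoli inequality produces an extra factor $f^{(q-p)/2}$ that cannot be absorbed in a single-exponent Moser scheme. Tracking it requires either a two-scale iteration or a careful reabsorption that loses a small power at each pass; this is precisely the role of the slack $\theta>1$ and it gives the exponent $\theta/(r-q+1)$ in the second summand. The $\delta$-dependence of the constant is a mild artifact of controlling the implicit Sobolev constant for small $R$, and drops out once $R$ is larger than a universal threshold, as stated.
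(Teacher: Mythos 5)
Your plan follows the paper's strategy essentially step by step: Lemma \ref{lemma2.1}/Lemma \ref{lemma4.1} (Bochner plus the linearized operator $\mathcal{L}_{p,q}$), a refined Kato/Cauchy estimate giving the pointwise inequality of Lemma \ref{lemma4.2}, the test function $f_\varepsilon^t\eta^2$ and integration by parts yielding the main integral inequality (Lemma \ref{lemma4.3}), Saloff-Coste's Sobolev inequality, and a Moser iteration closed by the absorption Lemma \ref{lemma2.2}. So the overall route is the right one. Two details in your sketch, however, diverge from what actually makes the argument work. First, your ``schematic'' Caccioppoli inequality collapses the gradient term to a single unweighted $\int|\nabla(f^{(s+1)/2}\eta)|^2$; the correct inequality (Lemma \ref{lemma4.3}) keeps two weighted gradient terms, $t\int f^{t+p-2}|\nabla f|^2\eta^2$ and $t\int f^{t+q-2}|\nabla f|^2\eta^2$, and two source terms $f^{t+p}$ and $f^{t+q}$ on the right; the iteration is then closed not by a ``two-scale'' scheme but by the crude bound $f^{t+q}\le\|f\|_{L^\infty}^{q-p}f^{t+p}$, after which the factor $\|f\|_{L^\infty}^{(n-2)(q-p)/(2(t_0+p))}$ is reabsorbed via Young's inequality and Lemma \ref{lemma2.2}; this reabsorption is exactly what produces the slack exponent $\frac{2(t_0+p)}{2(t_0+p)-(n-2)(q-p)}<\theta$ on the $q$-term. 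Second, the $\delta$-dependence of $\widetilde{\mathcal{C}}$ is not an artifact of the Sobolev constant for small $R$ (that constant is fully absorbed by the choice $t_0\sim 1+\kappa R^2$); it comes from the $p$/$q$ mismatch in the $L^{\beta_1}$ estimate, specifically the extra negative powers $R^{-2(q-p)/[(t_0+p)(r-q+1)]}$ appearing in the terms $I_2$ and $I_4$ of Lemma \ref{lemma4.4}, which are bounded only when $R\ge\delta$ and disappear in the limit $R\to\infty$. Neither issue invalidates the overall strategy, but both are the genuinely new obstructions in the $(p,q)$ setting relative to $p=q$, and your sketch as written does not yet handle them.
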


\noindent\textbf{Remark 1.} Here we would like to give several comments on the above corollary.
\begin{enumerate}
\item When $p=q$ in \eqref{1}, it is easy to see from the arguments given in Section 4 that one can choose $\theta=1$ and $\delta=0$. In other words, we can recover the result obtained in \cite{HHW1}.

\item In fact, for the case $\dim(M) =2$ we can also obtain some similar conclusions with the case $\dim(M)\ge 3$.

\item The assumptions (1) and (2) on function $h(x, y)$ can be weaken, actually, one need only to assume $h(u(x), f(x))\equiv h(u(x), |\nabla u(x)|^2)$ satisfies pointwisely (with respect to $x$) one of the above two conditions stated in Theorem \ref{theorem1.2}. It is easy to see this from the arguments given in Section 4.
\end{enumerate}

By using the above theorem, we can easily achieve the following Liouville type theorem.

\begin{corollary}\label{corollary1.3}
Let $(M,~g)$ ($\dim(M)\ge3$) be a complete noncompact Riemannian manifold with nonnegative Ricci curvature and let $u$ be a solution of equation \eqref{1} in $M$.  Assume that $h\in C^1(\mathbb{R}\times\mathbb{R}^+)$ satisfies one of the following two conditions
\begin{enumerate}
\item $\frac{\partial h}{\partial x}(x,y)\le0$,\quad $\left(\frac{\partial h}{\partial y}(x,y)\right)^2\le\mu^2 y^{r-2}$ \quad\mbox{and}\quad $h^2(x,y)\ge\lambda y^r$;

\item $\frac{\partial h}{\partial x}(x,y)\le-\lambda y^{r-\frac{q}{2}}$ \quad\mbox{and}\quad $\left(\frac{\partial h}{\partial y}(x,y)\right)^2\le\mu^2 y^{r-2}$,
\end{enumerate}
where $r>q-1\geq p-1>0$, $\lambda>0$ and $\mu>0$. Then either $u$ is a constant or $u$ does not exist.
\end{corollary}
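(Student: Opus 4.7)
The plan is to deduce this statement as a direct consequence of Theorem \ref{theorem1.2}. The hypotheses on $h$ in the corollary are identical to those in Theorem \ref{theorem1.2}, and the curvature assumption $\mathrm{Ric} \geq 0$ corresponds to choosing $\kappa = 0$. Fix a basepoint $o \in M$, pick any $\delta > 0$ and $\theta > 1$, and for each $R > \delta$ apply Theorem \ref{theorem1.2} on the geodesic ball $B(o, R) \subset M$ to obtain
\begin{align*}
\sup_{B_{R/4}(o)}|\nabla u|^2 \le \widetilde{\mathcal{C}}\left[\left(\frac{1}{R^2}\right)^{\frac{1}{r-p+1}} + \left(\frac{1}{R^2}\right)^{\frac{\theta}{r-q+1}}\right],
\end{align*}
with $\widetilde{\mathcal{C}} = \widetilde{\mathcal{C}}(n,p,q,\lambda,r,\delta,\theta)$.

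The next step is to let $R \to \infty$. Because $r > q-1 \geq p-1 > 0$, both exponents $\tfrac{1}{r-p+1}$ and $\tfrac{\theta}{r-q+1}$ are strictly positive, so each of the two terms on the right-hand side tends to $0$. Moreover, by the final clause of Theorem \ref{theorem1.2}, $\widetilde{\mathcal{C}}$ is independent of $\delta$ once $R$ is sufficiently large, hence it stays bounded along the sequence $R \to \infty$. Since every $x \in M$ lies in $B_{R/4}(o)$ for all sufficiently large $R$, we conclude $|\nabla u|(x) = 0$ for every $x \in M$. Therefore $u$ is constant; equivalently, if no constant function realizes a solution of \eqref{1} under the given hypotheses on $h$, then no solution exists at all, which is exactly the dichotomy in the statement.

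There is no substantial obstacle in this argument: the corollary is a letting-$R$-tend-to-infinity statement and nothing more. The only delicate point, already built into the formulation of Theorem \ref{theorem1.2}, is that $\widetilde{\mathcal{C}}$ does not blow up as $R \to \infty$; the reserved clause ``$\widetilde{\mathcal{C}}$ does not depend on $\delta$ if $R$ is large enough'' is exactly what legitimizes the limit. If one wanted to, one could also note explicitly that the argument is independent of the choice of basepoint $o$, so vanishing of $|\nabla u|$ holds pointwise on all of $M$.
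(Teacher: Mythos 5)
Your proof is correct and follows the intended route: set $\kappa = 0$ in Theorem \ref{theorem1.2}, note that $\widetilde{\mathcal{C}}$ is independent of $R$ (depending only on $n,p,q,\lambda,r,\delta,\theta$), observe that both exponents $\tfrac{1}{r-p+1}$ and $\tfrac{\theta}{r-q+1}$ are positive, and let $R\to\infty$ to force $|\nabla u|\equiv 0$. Your remark about the ``final clause'' and $\delta$-independence for large $R$ is unnecessary — once $\delta$ and $\theta$ are fixed, $\widetilde{\mathcal{C}}$ is simply a fixed number that does not vary with $R$ — but this over-caution does not affect the validity of the argument; the paper itself leaves the corollary's proof implicit as an immediate consequence of Theorem \ref{theorem1.2}, and your write-up is exactly what is meant.
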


Now, assume that $h(u,|\nabla u|^2)=a|\nabla u|^r$ in \eqref{1}, where $a\neq0$ and $r>q-1$, in other words \eqref{1} reduces to the equation \eqref{1.3}:
\begin{align*}
\Delta_pu+\Delta_qu+a|\nabla u|^r=0.
\end{align*}
It is easy to see that $h(x, y)$ fulfills the conditions posed in Theorem \ref{theorem1.2}, hence we obtain the following theorem.

\begin{theorem}\label{theorem1.4}
Let $(M,~g)$ be a complete Riemannian manifold with $\mathrm{Ric}\ge-(n-1)\kappa$ ($\kappa\ge0$, $n\ge3$) and $1<p\le q$. If $u$ is a solution to the above equation \eqref{1.3} on $M$, where $a\neq0$ and $r>q-1$, then, for any given $\delta>0$ and $\theta>1$ there exists a positive constant $\widetilde{\mathcal{C}} =\widetilde{\mathcal{C}}(n,p,q,\lambda,r,\delta,\theta)$ such that
\begin{align*}
\sup_{B_{R/4}(o)}|\nabla u|^2\le\widetilde{\mathcal{C}}\left[\left(\frac{1+\kappa R^2}{R^2}\right)^\frac{1}{r-p+1}+\left(\frac{1+\kappa R^2}{R^2}\right)^\frac{\theta}{r-q+1}\right],
\end{align*}
where $R>\delta>0$.
\end{theorem}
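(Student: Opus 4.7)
The plan is to recognize Theorem \ref{theorem1.4} as an immediate corollary of Theorem \ref{theorem1.2}, obtained by verifying that the reaction $h(u,|\nabla u|^2)=a|\nabla u|^r$ falls under condition (1) of that theorem. Writing $h$ as a function on $\mathbb{R}\times\mathbb{R}^+$, it takes the explicit form $h(x,y)=a y^{r/2}$, so the three inequalities in condition (1) reduce to elementary calculations on a monomial.

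First I would compute $\partial_x h(x,y)\equiv 0$, which trivially satisfies $\partial_x h\le 0$. Next, $\partial_y h(x,y)=\tfrac{ar}{2}y^{r/2-1}$, hence $(\partial_y h)^2=\tfrac{a^2r^2}{4}y^{r-2}$, giving the bound $(\partial_y h)^2\le\mu^2 y^{r-2}$ with $\mu=\tfrac{|a|r}{2}$. Finally, $h^2(x,y)=a^2 y^r$, so the lower bound $h^2\ge\lambda y^r$ holds with $\lambda=a^2$, which is strictly positive precisely because $a\neq 0$. The assumption $r>q-1\ge p-1>0$ of Theorem \ref{theorem1.2} follows from the hypotheses $1<p\le q$ and $r>q-1$ in the statement. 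Applying Theorem \ref{theorem1.2} on $B(o,R)$ then yields the asserted estimate with constant $\widetilde{\mathcal{C}}=\widetilde{\mathcal{C}}(n,p,q,a^2,r,\delta,\theta)$.

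There is essentially no obstacle in this argument: the entire proof consists of the three monomial verifications above, and the reduction to Theorem \ref{theorem1.2} is mechanical. The only point worth remarking, rather than a difficulty, is the role of $a\neq 0$: it is exactly this hypothesis that furnishes a positive $\lambda$ for condition (1). In the degenerate case $a=0$ the equation becomes the pure $(p,q)$-harmonic equation, which is treated separately by Theorem \ref{theorem1.1}, so the restriction $a\neq 0$ is natural and not a limitation of the method.
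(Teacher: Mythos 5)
Your proposal is correct and takes exactly the approach the paper intends: the paper states only that ``it is easy to see that $h(x,y)$ fulfills the conditions posed in Theorem \ref{theorem1.2}'' and cites that theorem, and you have simply written out that verification explicitly with $h(x,y)=ay^{r/2}$, $\mu=\tfrac{|a|r}{2}$, and $\lambda=a^2$. Your remark on the role of $a\ne 0$ in supplying a positive $\lambda$ is an accurate and helpful observation but not a departure from the paper's argument.
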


Moreover, according to Corollary \ref{corollary1.3} we have the following conclusion:
\begin{corollary}\label{theorem1.4}
Let $1<p\le q$ and $M$ ($\dim(M)\ge3$) be a complete non-compact Riemannian manifold with non-negative Ricci curvature. If $u$ is a solution to the above equation \eqref{1.3} on $M$, i.e. $u$ satisfies
\begin{align*}
\Delta_pu+\Delta_qu+a|\nabla u|^r=0,
\end{align*}
where $a\neq0$ and $r>q-1$, then $u$ is a trivial constant solution.
\end{corollary}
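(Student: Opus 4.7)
The plan is to derive this Liouville corollary as an immediate consequence of the interior gradient estimate in Theorem 1.4, by letting the radius of the geodesic ball tend to infinity.

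First I would check that the function $h(x,y) = a\, y^{r/2}$ (so that $h(u,|\nabla u|^2) = a|\nabla u|^r$) satisfies condition (1) of Corollary~\ref{corollary1.3}. Since $h$ is independent of $x$, we have $\partial_x h \equiv 0 \le 0$. A direct computation gives $\partial_y h = (ar/2)\, y^{r/2 - 1}$, hence $(\partial_y h)^2 = (a^2 r^2/4)\, y^{r-2}$, and one may take $\mu = |a|r/2$. Moreover $h^2 = a^2 y^r$, and since $a \neq 0$ the choice $\lambda = a^2 > 0$ makes the bound $h^2 \ge \lambda y^r$ hold. Thus all hypotheses of condition (1) in Corollary~\ref{corollary1.3} are met.

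Next, since $M$ is a complete noncompact Riemannian manifold with nonnegative Ricci curvature, I would apply the gradient estimate of Theorem 1.4 with $\kappa = 0$ on every geodesic ball $B_R(o) \subset M$. Fix any $\delta > 0$ and any $\theta > 1$. The estimate specializes to
\begin{align*}
\sup_{B_{R/4}(o)} |\nabla u|^2 \le \widetilde{\mathcal{C}}\Bigl[R^{-2/(r-p+1)} + R^{-2\theta/(r-q+1)}\Bigr]
\end{align*}
for every $R > \delta$, with $\widetilde{\mathcal{C}}$ becoming independent of $\delta$ as soon as $R$ is sufficiently large. Because $r > q - 1 \ge p - 1 > 0$, both exponents $2/(r-p+1)$ and $2\theta/(r-q+1)$ are strictly positive, so the right-hand side tends to zero as $R \to \infty$. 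For any fixed $x \in M$, choosing $R$ large enough that $x \in B_{R/4}(o)$ and then passing to the limit forces $|\nabla u(x)| = 0$. Since $x$ was arbitrary, $|\nabla u| \equiv 0$ on $M$, and therefore $u$ is a constant, which is indeed the trivial constant solution of \eqref{1.3}.

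There is no genuine obstacle in this argument: the heavy analytic work (Bochner formula, Saloff-Coste's Sobolev inequality, Nash-Moser iteration) has already been absorbed into Theorem 1.4. The only point requiring a moment of care is the a priori dependence of $\widetilde{\mathcal{C}}$ on $\delta$ for small $R$, but the statement of Theorem 1.4 explicitly asserts that this dependence is removed once $R$ is large enough, which is precisely the regime relevant for letting $R \to \infty$ in the Liouville argument.
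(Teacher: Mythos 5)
Your proof is correct and takes essentially the same route as the paper. The paper simply cites Corollary~\ref{corollary1.3} (which in turn rests on Theorem~\ref{theorem1.2} and the $R\to\infty$ argument), whereas you spell out the verification that $h(x,y)=a\,y^{r/2}$ satisfies condition (1) with $\mu=|a|r/2$, $\lambda=a^2$, and then make explicit the $\kappa=0$, $R\to\infty$ step from the gradient estimate of Theorem~\ref{theorem1.4}; this is precisely the chain of reasoning the paper compresses into its one-line justification, so no gap, just more detail.
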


\subsection{Organization of paper}\

Our paper is organized as follows: In Section 2, we first recall some preliminary and then establish some important lemmas, which will play a key role in the Nash-Moser iteration process. In Section 3,  we provide a simple proof of Theorem \ref{theorem1.1} by using  Bishop-Gromov volume estimate, which is inspired by \cite{CM}. We give a proof of Theorem \ref{theorem1.2} by using Nash-Moser iteration method in Section 4, which is the main body of this paper. Some further applications are presented in Section 5.

\section{\textbf{Preliminary lemmas}}
Throughout this paper, we denote $(M,g)$ an $n$-dim Riemannian manifold $(n\ge3)$, and $\nabla$ the corresponding Levi-Civita connection. We denote the volume form
$$d\mathrm{vol}=\sqrt{\det(g_{ij})}dx_1\wedge\cdots\wedge dx_n,$$
where $(x_1,\cdots,x_n)$ is a local coordinates, and for simplicity we usually omit the volume form of integral over $M$.

The $z$-Laplacian operator is defined by
\begin{center}
$
\Delta_zu=\mathrm{div}\left(\left|\nabla u\right|^{z-2}\nabla u\right),
$
\end{center}
where $z$ is a real number. Now we consider the linearized operator $\mathcal{L}_z$ of $z$-Laplace operator:
\begin{align}\label{2.1}
\mathcal{L}_z(\psi)=\mathrm{div}\left(f^{\frac{z}{2}-1}A_z\left(\nabla\psi\right)\right),
\end{align}
where $f=\left|\nabla u\right|^2$, and
\begin{align}\label{2.2}
A_z\left(\nabla\psi\right)=\nabla\psi+(z-2)f^{-1}\langle\nabla\psi,\nabla u\rangle\nabla u.
\end{align}
We first derive an useful expression of $\mathcal{L}_z(f)$.

\begin{lemma}\label{lemma2.1}
The equality
\begin{align*}
\mathcal{L}_z(f)=\left(\dfrac{z}{2}-1\right)f^{\frac{z}{2}-2}\left|\nabla f\right|^2+2f^{\frac{z}{2}-1}\left(\left|\nabla\nabla u\right|^2+\mathrm{Ric}\left(\nabla u,\nabla u\right)\right)+2\langle\nabla\Delta_zu,\nabla u\rangle
\end{align*}
holds point-wisely in $\{ x\in M:~f(x)>0$$\}$.
\end{lemma}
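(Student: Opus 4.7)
The plan is to compute $\mathcal{L}_z(f)$ directly from its definition \eqref{2.1}--\eqref{2.2}, and to match the result, after a single application of the classical Bochner identity, with a direct expansion of $\langle\nabla\Delta_z u,\nabla u\rangle$. I work pointwise at a point where $f>0$, so that the coefficients $f^{z/2-1}$ and $f^{z/2-2}$ are smooth and all derivatives are classical; if convenient I would pick a local orthonormal frame with $u_i=e_i(u)$, $u_{ij}=\nabla_j\nabla_i u$, giving $f=u_k u_k$, $f_i=2u_k u_{ki}$, and $\langle\nabla f,\nabla u\rangle=2u_i u_k u_{ki}$.

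Substituting $\psi=f$ into the definition splits the operator as
\begin{align*}
\mathcal{L}_z(f)=\mathrm{div}\!\left(f^{\frac{z}{2}-1}\nabla f\right)+(z-2)\,\mathrm{div}\!\left(f^{\frac{z}{2}-2}\langle\nabla f,\nabla u\rangle\nabla u\right).
\end{align*}
The first divergence expands by the product rule to $(\tfrac{z}{2}-1)f^{\frac{z}{2}-2}|\nabla f|^2+f^{\frac{z}{2}-1}\Delta f$, which already accounts for the leading gradient-squared term of the statement. The classical Bochner identity $\tfrac12\Delta f=|\nabla\nabla u|^2+\mathrm{Ric}(\nabla u,\nabla u)+\langle\nabla\Delta u,\nabla u\rangle$ then converts $f^{\frac{z}{2}-1}\Delta f$ into the target $2f^{\frac{z}{2}-1}(|\nabla\nabla u|^2+\mathrm{Ric}(\nabla u,\nabla u))$ plus a residual $2f^{\frac{z}{2}-1}\langle\nabla\Delta u,\nabla u\rangle$.

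It remains to verify that this residual, together with the second divergence above, equals $2\langle\nabla\Delta_z u,\nabla u\rangle$. For this I would compute $\langle\nabla\Delta_z u,\nabla u\rangle$ directly from
\begin{align*}
\Delta_z u=f^{\frac{z-2}{2}}\Delta u+\tfrac{z-2}{2}f^{\frac{z-4}{2}}\langle\nabla f,\nabla u\rangle,
\end{align*}
which follows by expanding $\mathrm{div}(f^{(z-2)/2}\nabla u)$, and expand the second divergence by another product rule. Both expansions produce exactly the same four scalar terms, proportional to $f^{\frac{z-4}{2}}\langle\nabla f,\nabla u\rangle\Delta u$, $f^{\frac{z-2}{2}}\langle\nabla\Delta u,\nabla u\rangle$, $f^{\frac{z-6}{2}}\langle\nabla f,\nabla u\rangle^2$ and $f^{\frac{z-4}{2}}\langle\nabla\langle\nabla f,\nabla u\rangle,\nabla u\rangle$, and the coefficients match term by term (the coefficient of the quadratic $\langle\nabla f,\nabla u\rangle^2$ piece being the key consistency check: $(z-2)(\tfrac{z}{2}-2)=\tfrac{(z-2)(z-4)}{2}$). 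The main obstacle is therefore purely clerical: tracking the numerous product-rule factors and fractional powers of $f$ without sign or coefficient error. No curvature computation beyond the single application of Bochner is required.
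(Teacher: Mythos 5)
Your proposal is correct and follows essentially the same route as the paper: expand $\mathcal{L}_z(f)$ by the product rule, apply the Bochner identity to the $f^{\frac{z}{2}-1}\Delta f$ term, and observe that the remaining terms of the expansion reassemble into $2\langle\nabla\Delta_z u,\nabla u\rangle-2f^{\frac{z}{2}-1}\langle\nabla\Delta u,\nabla u\rangle$. The only cosmetic difference is the order of steps (you invoke Bochner before matching the residual, the paper after), and your coefficient check $(z-2)(\tfrac{z}{2}-2)=\tfrac{(z-2)(z-4)}{2}$ is exactly the consistency that makes the paper's displays (2.4) and (2.5) agree.
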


\begin{proof}
By \eqref{2.2}, we have
\begin{align}\label{2.3}
 A_z\left(\nabla f\right)=\nabla f+(z-2)f^{-1}\langle\nabla f,\nabla u\rangle\nabla u.
\end{align}
Combining \eqref{2.1} and \eqref{2.3} together, we obtain
\begin{align}
\begin{split}\label{2.4}
\mathcal{L}_z(f)=&\left(\dfrac{z}{2}-1\right)f^{\frac{z}{2}-2}|\nabla f|^2+f^{\frac{z}{2}-1}\Delta f+(z-2)\left(\dfrac{z}{2}-2\right)f^{\frac{z}{2}-3}\langle\nabla f,\nabla u\rangle^2\\
&+(z-2)f^{\frac{z}{2}-2}\langle\nabla\langle\nabla f,\nabla u\rangle,\nabla u\rangle+(z-2)f^{\frac{z}{2}-2}\langle\nabla f,\nabla u\rangle\Delta u.
\end{split}
\end{align}
On the other hand, by the definition of the $z$-Laplacian, we have
\begin{align}
\begin{split}\label{2.5}
2\langle\nabla \Delta_zu,\nabla u\rangle=&(z-2)\left(\dfrac{z}{2}-2\right)f^{\frac{z}{2}-3}\langle\nabla f,\nabla u\rangle^2+(z-2)f^{\frac{z}{2}-2}\langle\nabla\langle\nabla f,\nabla u\rangle,\nabla u\rangle\\
&+(z-2)f^{\frac{z}{2}-2}\langle\nabla f,\nabla u\rangle\Delta u+2f^{\frac{z}{2}-1}\langle\nabla \Delta u,\nabla u\rangle.
\end{split}
\end{align}
Combining \eqref{2.4} and \eqref{2.5} together, we obtain
\begin{align}
\begin{split}\label{3.7}
\mathcal{L}_z(f)=\left(\dfrac{z}{2}-1\right)f^{\frac{z}{2}-2}|\nabla f|^2+f^{\frac{z}{2}-1}\Delta f+2\langle\nabla \Delta_zu,\nabla u\rangle-2f^{\frac{z}{2}-1}\langle\nabla \Delta u,\nabla u\rangle.
\end{split}
\end{align}
By \eqref{3.7} and the following Bochner formula
\begin{center}
 $
 \frac{1}{2}\Delta f=\left|\nabla\nabla u\right|^2+\mathrm{Ric}\left(\nabla u,\nabla u\right)+\langle\nabla \Delta u,\nabla u\rangle,
 $
 \end{center}
we finish the proof of Lemma \ref{lemma2.1}.
\end{proof}

Another tool that will be used in the later is the following lemma (for the proof see \cite{15}):
\begin{lemma}\label{lemma2.2}
Suppose that $\phi(t)$ is a positive and bounded function, which is defined on $[T_0,T_1]$. If for all $T_0\le t<s\le T_1$, $\phi$ satisfies
    \begin{center}
        $
        \phi(t)\le\theta_0\phi(s)+\dfrac{A}{(s-t)^{\alpha_0}}+B,
        $
    \end{center}
where $\theta_0<1$, $A$, $B$ and $\alpha_0$ are some non-negative constants. Then, for any $T_0\le \rho<\tau\le T_1$ there exists
\begin{center}
$
\phi(\rho)\le C(\alpha_0,\theta_0)\left[\dfrac{A}{(\tau-\rho)^{\alpha_0}}+B\right],
$
\end{center}
where $C(\alpha_0,\theta_0)$ is a positive constant which depends only on $\alpha_0$ and $\theta_0$. Furthermore, if we set $\theta_0=\dfrac{1}{2}$ and let $\alpha_0$ change in a bounded interval, then there exists a positive constant $C_0$ such that $C(\alpha_0,\frac{1}{2})\le C_0$.
\end{lemma}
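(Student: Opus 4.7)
The plan is to prove this via a standard geometric iteration. First I would fix some $\tau_0\in(0,1)$ with $\theta_0\tau_0^{-\alpha_0}<1$ (this is possible because $\theta_0<1$; one chooses for instance any $\tau_0$ in the nonempty interval $(\theta_0^{1/\alpha_0},1)$), and define the increasing sequence $\{t_i\}\subset[\rho,\tau]$ by
\begin{equation*}
t_0=\rho,\qquad t_{i+1}-t_i=(1-\tau_0)\tau_0^{i}(\tau-\rho),
\end{equation*}
so that $t_i\uparrow\tau$ as $i\to\infty$. Applying the hypothesis to $(t,s)=(t_i,t_{i+1})$ gives
\begin{equation*}
\phi(t_i)\le\theta_0\phi(t_{i+1})+\frac{A}{(1-\tau_0)^{\alpha_0}(\tau-\rho)^{\alpha_0}}\tau_0^{-i\alpha_0}+B.
\end{equation*}

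Next I would iterate this recurrence. A straightforward induction on $k$ yields
\begin{equation*}
\phi(\rho)\le\theta_0^{k}\phi(t_k)+\frac{A}{(1-\tau_0)^{\alpha_0}(\tau-\rho)^{\alpha_0}}\sum_{i=0}^{k-1}\bigl(\theta_0\tau_0^{-\alpha_0}\bigr)^{i}+B\sum_{i=0}^{k-1}\theta_0^{i}.
\end{equation*}
Because $\phi$ is bounded on $[T_0,T_1]$ and $\theta_0<1$, the first term vanishes as $k\to\infty$, and since $\theta_0\tau_0^{-\alpha_0}<1$ both geometric series converge. Letting $k\to\infty$ therefore produces the claimed bound with the explicit constant
\begin{equation*}
C(\alpha_0,\theta_0)=\max\!\left\{\frac{1}{(1-\tau_0)^{\alpha_0}(1-\theta_0\tau_0^{-\alpha_0})},\;\frac{1}{1-\theta_0}\right\}.
\end{equation*}

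For the final assertion, specialize to $\theta_0=\tfrac12$ and suppose $\alpha_0$ ranges over a bounded interval $[a_1,a_2]\subset(0,\infty)$. I would choose, for instance, $\tau_0=2^{-1/(2a_2)}$ (or any fixed $\tau_0\in(2^{-1/a_2},1)$), which is independent of $\alpha_0$ and satisfies $\tfrac12\tau_0^{-\alpha_0}\le\tfrac12\tau_0^{-a_2}<1$ uniformly in $\alpha_0\in[a_1,a_2]$. The factors $(1-\tau_0)^{-\alpha_0}$ and $(1-\tfrac12\tau_0^{-\alpha_0})^{-1}$ are then continuous in $\alpha_0$ on a compact set, hence bounded by some absolute constant $C_0$.

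The proof is essentially mechanical; the only subtle point is the choice of $\tau_0$, which has to depend on $\theta_0$ and $\alpha_0$ in such a way that the two competing requirements $\tau_0<1$ (so that $t_i\to\tau$) and $\theta_0\tau_0^{-\alpha_0}<1$ (so that the geometric series converges) are simultaneously met. Once this bookkeeping is handled, there is no substantive obstacle, and the uniform bound in the last statement follows immediately by keeping $\tau_0$ away from the boundary of the admissible interval.
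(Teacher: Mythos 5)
Your proof is correct and is essentially the same geometric-iteration argument that appears in the cited reference (Chen--Wu), which the paper invokes without reproducing: fix $\tau_0\in(\theta_0^{1/\alpha_0},1)$, build the geometric sequence $t_i\uparrow\tau$, iterate the hypothesis, and let $k\to\infty$ using boundedness of $\phi$ and convergence of the two geometric series. The explicit constant and the uniform bound for $\theta_0=\tfrac12$ with $\alpha_0$ in a bounded interval are handled correctly by choosing $\tau_0$ independent of $\alpha_0$.
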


We finally recall the following lemma, which shall play a key role in our proof of the main theorems.
\begin{lemma}(Saloff-Coste \cite{14})
Let $(M,g)$ be a complete manifold with $Ric\ge-(n-1)\kappa$. For $n>2$, there exists a positive constant $C_n$ depending only on $n$, such that for all $B\subset M$ of radius $R$ and volume $V$ we have for $h\in C_0^\infty(B)$
\begin{align*}
\left\|h\right\|^2_{L^{\frac{2n}{n-2}}(B)}\le \exp\left\{C_n(1+\sqrt{\kappa}R)\right\}V^{-\frac{2}{n}}R^2\left(\int_B{\left|\nabla h\right|^2+R^{-2}h^2}\right).
\end{align*}
For $n=2$, the above inequality holds with n replaced by any fixed $n'>2$.
\end{lemma}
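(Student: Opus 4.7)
The plan is to derive this local Sobolev inequality from two classical ingredients that are available under a lower Ricci bound, namely (i) volume doubling on balls up to radius $R$, and (ii) an $L^2$ Neumann--Poincaré inequality on such balls, and then to invoke the abstract ``doubling plus Poincaré implies Sobolev'' machinery of Grigoryan and Saloff-Coste.

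For (i), I would apply Bishop--Gromov volume comparison: from $\mathrm{Ric}\ge-(n-1)\kappa$, for every $x\in M$ and $0<r<R$,
\[
\frac{V(B(x,R))}{V(B(x,r))}\le\frac{V_\kappa(R)}{V_\kappa(r)}\le C_n\Bigl(\frac{R}{r}\Bigr)^{n}\exp\{C_n\sqrt{\kappa}R\},
\]
where $V_\kappa$ denotes the volume function of the simply connected model space of constant sectional curvature $-\kappa$; this gives doubling on $B(x,R)$ with constant bounded by $\exp\{C_n(1+\sqrt{\kappa}R)\}$. For (ii), I would prove the $L^2$ Neumann--Poincaré inequality on $B=B(x,R)$,
\[
\int_{B}|f-f_{B}|^{2}\le C_n R^{2}\exp\{C_n\sqrt{\kappa}R\}\int_{B}|\nabla f|^{2},
\]
either via Buser's isoperimetric inequality (which needs only a lower Ricci bound), or equivalently via Li's gradient estimate for the heat semigroup on $B$ combined with a duality argument.

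The main obstacle is the third step, namely converting doubling and Poincaré into the announced Sobolev inequality. I would follow one of two standard routes. The first is a Moser-type iteration: starting from the $L^2$-Poincaré estimate on concentric sub-balls whose radii shrink geometrically, and using doubling to control how the constants compound through the iteration, one bootstraps the integrability exponent from $2$ up to $\frac{2n}{n-2}$. The second is the heat-kernel route: Li--Yau's parabolic gradient estimate together with doubling yields the on-diagonal upper bound $p_t(x,x)\le C/V(B(x,\sqrt{t}))$, and Varopoulos' equivalence converts such a bound into the desired Sobolev inequality. Tracking explicit dependence on $\sqrt{\kappa}R$ and on $V$ in either approach produces the factor $\exp\{C_n(1+\sqrt{\kappa}R)\}V^{-2/n}R^{2}$, while the additional $R^{-2}h^2$ term on the right-hand side is exactly what one needs to absorb the mean-value contribution when applying the mean-zero Poincaré inequality to a function $h$ compactly supported in $B$. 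Finally, the case $n=2$ reduces to the case $n>2$ by fixing any $n'>2$ and observing that both doubling and Poincaré persist with $n'$ substituted for $n$, since the iteration requires only polynomial control of the doubling exponent.
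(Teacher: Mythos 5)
The paper does not prove this lemma; it is quoted verbatim from Saloff-Coste's paper \cite{14} and used as a black box, so there is no internal argument to compare against. That said, your outline does track the actual route by which the inequality is established in the literature: Bishop--Gromov gives doubling with constant $\exp\{C_n(1+\sqrt{\kappa}R)\}$, Buser (or a Li--Yau heat-kernel argument) gives the scale-$R$ $L^2$ Neumann--Poincar\'e inequality with the same exponential blow-up, and the doubling-plus-Poincar\'e machinery of Grigor'yan and Saloff-Coste upgrades these to the $L^{2n/(n-2)}$ Sobolev inequality with the normalization $V^{-2/n}R^2$. You are also right that the extra $R^{-2}h^2$ term is what converts the mean-zero Poincar\'e bound into one valid for compactly supported $h$ without subtracting the average.

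Two things you should be aware of if you want to turn this sketch into an actual proof. First, the implication ``doubling $+$ Poincar\'e $\Rightarrow$ Sobolev'' is a substantial theorem in its own right; invoking it is fine when citing, but a self-contained proof would have to reproduce either the Moser-iteration bootstrap or the Varopoulos heat-kernel equivalence, and in either case one must track exactly how the doubling constant and the Poincar\'e constant enter, so that the final constant really is of the form $\exp\{C_n(1+\sqrt{\kappa}R)\}$ rather than something worse (e.g.\ doubly exponential in $\sqrt{\kappa}R$). Saloff-Coste's original argument is careful about this; your sketch asserts the outcome but does not verify it. Second, the case $n=2$ is not handled simply by ``observing that doubling and Poincar\'e persist with $n'$''; what one actually does is note that doubling with exponent $2$ trivially implies doubling with any exponent $n'>2$, and that the target exponent $2n'/(n'-2)$ is then finite, so the $n>2$ proof applies wholesale --- a small point, but worth stating precisely. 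Neither issue is a wrong turn, just places where the blind sketch is thinner than a genuine proof would need to be; since the paper simply cites the result, this level of detail is acceptable for the present purpose.
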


\section{\textbf{Proof of Theorem \ref{theorem1.1}}}

In this section we do not adopted the Moser iteration method and use instead an integral estimate method to give a simple proof of Theorem \ref{theorem1.1}.
\medskip

\noindent\textbf{Proof of Theorem \ref{theorem1.1}}
\begin{proof}
Let $\eta\in C_0^{\infty}(M)$ be a nonnegative function.  Multiplying \eqref{1.2} by $(u+1)^{1-q}\eta^q$ and integrating over $M$, we obtain the following
\begin{align*}
\int_M\left(\Delta_p u+\Delta_q u\right)(u+1)^{1-q}\eta^q\le0.
\end{align*}
Integrating by parts, we obtain
\begin{align*}
-\int_M\left\langle f^{\frac{p}{2}-1}\nabla u+f^{\frac{q}{2}-1}\nabla u,\,\nabla\left[(u+1)^{1-q}\eta^q\right]\right\rangle\le0.
\end{align*}
Hence, we have
\begin{align}\label{3.1}
\begin{split}
(q-1)\int_M\left(f^{\frac{p}{2}}+f^{\frac{q}{2}}\right)(u+1)^{-q}\eta^q\le q\int_M\left(f^{\frac{p}{2}-1}+f^{\frac{q}{2}-1}\right)(u+1)^{1-q}\eta^{q-1}\langle\nabla u,\nabla\eta\rangle.
\end{split}
\end{align}
By absolute value inequality and Cauchy inequality, we obtain
\begin{align*}
&q\int_M\left(f^{\frac{p}{2}-1}+f^{\frac{q}{2}-1}\right)(u+1)^{1-q}\eta^{q-1}\langle\nabla u,\nabla\eta\rangle\\
\le&q\int_M\left(f^{\frac{p}{2}-\frac{1}{2}}+f^{\frac{q}{2}-\frac{1}{2}}\right)(u+1)^{1-q}\eta^{q-1}|\nabla\eta|\\
\le&\frac{q-1}{2}\int_M\left(f^{\frac{p}{2}}+f^{\frac{q}{2}}\right)(u+1)^{-q}\eta^q+\frac{q^2}{2(q-1)}\int_M\left(f^{\frac{p}{2}-1}+f^{\frac{q}{2}-1}\right)(u+1)^{2-q}\eta^{q-2}|\nabla\eta|^2.
\end{align*}
Substituting the above inequality into \eqref{3.1}, we have
\begin{align}\label{3.2}
\frac{(q-1)^2}{q^2}\int_M\left(f^{\frac{p}{2}}+f^{\frac{q}{2}}\right)(u+1)^{-q}\eta^q\le\int_M\left(f^{\frac{p}{2}-1}+f^{\frac{q}{2}-1}\right)(u+1)^{2-q}\eta^{q-2}|\nabla\eta|^2.
\end{align}
By using Young inequality and $p>2$, we have
\begin{align*}
\int_M f^{\frac{p}{2}-1}(u+1)^{2-q}\eta^{q-2}|\nabla\eta|^2\le \frac{(q-1)^2}{2q^2}\int_Mf^{\frac{p}{2}}(u+1)^{-q}\eta^q+\mathcal{C}(p,q)\int_M(u+1)^{p-q}\eta^{q-p}|\nabla \eta|^p
\end{align*}
and
\begin{align*}
\int_M f^{\frac{q}{2}-1}(u+1)^{2-q}\eta^{q-2}|\nabla\eta|^2\le \frac{(q-1)^2}{2q^2}\int_Mf^{\frac{q}{2}}(u+1)^{-q}\eta^q+\mathcal{C}(q)\int_M|\nabla \eta|^q.
\end{align*}
Substituting the above inequalities into \eqref{3.2}, we have
\begin{align*}
\frac{(q-1)^2}{2q^2}\int_M\left(f^{\frac{p}{2}}+f^{\frac{q}{2}}\right)(u+1)^{-q}\eta^q\le\mathcal{C}(p,q)\int_M(u+1)^{p-q}\eta^{q-p}|\nabla \eta|^p+\mathcal{C}(q)\int_M|\nabla \eta|^q.
\end{align*}
Furthermore, by using \eqref{3.2}, we can know that the above inequality is also true under $p=2$. Since $p\le q$ and $u\ge 0$, we arrive at
\begin{align*}
\frac{(q-1)^2}{2q^2}\int_M\left(f^{\frac{p}{2}}+f^{\frac{q}{2}}\right)(u+1)^{-q}\eta^q\le\mathcal{C}(p,q)\int_M\eta^{q-p}|\nabla \eta|^p+\mathcal{C}(q)\int_M|\nabla \eta|^q.
\end{align*}

Let $\Phi\in C^\infty(\mathbb{R})$ be a nonnegative function such that $0\le\Phi\le1$, $\Phi'\le0$, $\Phi=1$ on $(-\infty,1]$, $\Phi=0$ on $[2,+\infty)$ and $|\Phi'|\le C$ for some positive constant $C$. For any $\rho>1$, define
$$\Psi=\Phi\left(\frac{\log r_0}{\log\rho}\right),$$
where $r_0$ is the distance function. Then, we have $\Psi\equiv 1$ on $B_\rho$, $\Psi\equiv 0$ on $B_{\rho^2}^c$ and
$$|\nabla \Psi|\le\frac{C}{r_0\log\rho}$$
on $B_{\rho^2}\setminus B_\rho$. With the choice of $\eta=\Psi$ and the above integral inequality yield
\begin{align}\label{3.3}
\frac{(q-1)^2}{2q^2}\int_{B_\rho}\left(f^{\frac{p}{2}}+f^{\frac{q}{2}}\right)(u+1)^{-q}\le\frac{\mathcal{C}}{(\log \rho)^p}\int_{B_{\rho^2}\setminus B_\rho}\frac{1}{{r_0}^p}+\frac{\mathcal{C}}{(\log \rho)^q}\int_{B_{\rho^2}\setminus B_\rho}\frac{1}{{r_0}^q}.
\end{align}

Let $V(r_0)=\mathrm{vol}(B_{r_0})$ and $S(r_0)=\mathrm{meas}(\partial B_{r_0})$ denot the area of $B_{r_0}$ and the area of $\partial B_{r_0}$ respectively. Then, by using the coarea formula, i.e. $V'=S$, an integration by parts and the Bishop-Gromov volume comparison theorem, we obtain
\begin{align*}
\begin{split}
\int_{B_{\rho^2}\setminus B_\rho}\frac{1}{{r_0}^z}=\int_\rho^{\rho^2}\frac{1}{{r_0}^z}S(r_0)dr_0= &\left[\frac{1}{{r_0}^z}V(r_0)\right]_\rho^{\rho^2}+z\int_\rho^{\rho^2}\frac{1}{{r_0}^{z+1}}V(r_0)dr_0\\
\le&\mathcal{C}\left(\frac{1}{\rho^{z-n}}+\int_\rho^{\rho^2}\frac{1}{r_0^{z+1-n}}dr\right)\\
\le&\begin{cases}
\mathcal{C}(1+\log\rho),&z=n,\\[3mm]
\dfrac{\mathcal{C}}{\rho^{z-n}},&z>n.
\end{cases}
\end{split}
\end{align*}
Hence, we arrive at
\begin{align*}
    \dfrac{\mathcal{C}}{(\log\rho)^z}\int_{B_{\rho^2}\setminus B_\rho}\frac{1}{{r_0}^z}\le\begin{cases}
\dfrac{\mathcal{C}(1+\log\rho)}{(\log\rho)^n},&z=n,\\[3mm]
\dfrac{\mathcal{C}}{\rho^{z-n}(\log\rho)^z},&z>n.
\end{cases}
\end{align*}
Therefore, we can deduce that
\begin{align}\label{3.4}
\lim_{\rho\to+\infty}\dfrac{\mathcal{C}}{(\log\rho)^z}\int_{B_{\rho^2}\setminus B_\rho}\frac{1}{{r_0}^z}=0,
\end{align}
for any $z\ge n$. Combining \eqref{3.3} and \eqref{3.4} together, we arrive at
\begin{align*}
\int_M\left(f^{\frac{p}{2}}+f^{\frac{q}{2}}\right)(u+1)^{-q}=0,
\end{align*}
which implies that
\begin{align*}
|\nabla u|\equiv 0\quad\mbox{in}~M.
\end{align*}
Combining above, we finish the proof of Theorem \ref{theorem1.1}.
\end{proof}


\section{\textbf{Proof of Theorem \ref{theorem1.2}}}
First, we need to give the point-wise estimates of $\mathcal{L}_{p,q}(f)$, where $\mathcal{L}_{p,q}$ is the linearized operator of $\Delta_p +\Delta_q$ (the sum of $p$-Laplacian and $q$-Laplacian) at $u$.

\subsection{\textbf{Estimate for the linearized operator of $p$-Laplace $+$ $q$-Laplace operator}}\
\begin{lemma}\label{lemma4.1} Let $u$ be a solution of equation \eqref{1} in $\Omega\subset M$. Denote $f=\left|\nabla u\right|^2$. Then, the following
\begin{align*}
\mathcal{L}_{p,q}(f)=&\left(\frac{p}{2}-1\right)f^{\frac{p}{2}-2}|\nabla f|^2+\left(\frac{q}{2}-1\right)f^{\frac{q}{2}-2}|\nabla f|^2+2\left(f^{\frac{p}{2}-1}+f^{\frac{q}{2}-1}\right)\left(\left|\nabla\nabla u\right|^2+\mathrm{Ric}\left(\nabla u,\nabla u\right)\right)\\
&-2\frac{\partial h}{\partial x}(u,f)f-2\frac{\partial h}{\partial y}(u,f)\langle\nabla f,\nabla u\rangle
\end{align*}
holds point-wisely in $\{x\in \Omega:~f(x)>0$$\}$.
\end{lemma}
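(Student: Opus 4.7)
The plan is to reduce Lemma 4.1 to Lemma 2.1, applied twice, plus a direct use of the equation \eqref{1}. Since $\mathcal{L}_{p,q}$ is by definition the linearization of the additive operator $\Delta_p+\Delta_q$ at $u$, and linearization commutes with sums, I have $\mathcal{L}_{p,q}=\mathcal{L}_p+\mathcal{L}_q$ in the sense of the formula \eqref{2.1} (just inspect \eqref{2.1}--\eqref{2.2} with $z=p$ and $z=q$ and add). Consequently
\begin{align*}
\mathcal{L}_{p,q}(f)=\mathcal{L}_p(f)+\mathcal{L}_q(f).
\end{align*}

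Next, I would apply Lemma 2.1 to each summand on the open set $\{f>0\}$ (where $u\in C^3$, so the pointwise Bochner computation is legal). Adding the two resulting identities term by term produces exactly the first line of the claimed formula, together with the extra contribution
\begin{align*}
2\langle\nabla\Delta_p u,\nabla u\rangle+2\langle\nabla\Delta_q u,\nabla u\rangle=2\langle\nabla(\Delta_p u+\Delta_q u),\nabla u\rangle.
\end{align*}

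Finally I would invoke the PDE. From \eqref{1}, $\Delta_p u+\Delta_q u=-h(u,f)$, so taking the gradient via the chain rule gives
\begin{align*}
\nabla(\Delta_p u+\Delta_q u)=-\tfrac{\partial h}{\partial x}(u,f)\,\nabla u-\tfrac{\partial h}{\partial y}(u,f)\,\nabla f.
\end{align*}
Pairing with $\nabla u$ and multiplying by $2$ yields $-2\tfrac{\partial h}{\partial x}(u,f)f-2\tfrac{\partial h}{\partial y}(u,f)\langle\nabla f,\nabla u\rangle$, which is exactly the remaining two terms in the statement. Putting the pieces together completes the identity on $\{f>0\}$.

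There is no substantive obstacle here: the lemma is essentially a bookkeeping consequence of Lemma 2.1 applied twice and one use of the equation. The only thing to be careful about is the domain of validity, namely restricting everything to the open set where $f>0$ so that $f^{z/2-2}$ and the pointwise Bochner identity both make sense; this is already built into the statement.
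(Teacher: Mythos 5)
Your argument is correct and coincides with the paper's own proof: both decompose $\mathcal{L}_{p,q}=\mathcal{L}_p+\mathcal{L}_q$, apply Lemma \ref{lemma2.1} with $z=p$ and $z=q$, and then substitute $\Delta_p u+\Delta_q u=-h(u,f)$ from \eqref{1}. You spell out the chain-rule expansion of $\nabla h(u,f)$ that the paper leaves implicit, which is a harmless and clarifying addition.
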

\begin{proof}
    By Lemma \ref{lemma2.1}, we arrive at
    \begin{align*}
        \mathcal{L}_{p,q}(f)=&\mathcal{L}_{p}(f)+\mathcal{L}_{q}(f)\\
        =&\left(\frac{p}{2}-1\right)f^{\frac{p}{2}-2}|\nabla f|^2+\left(\frac{q}{2}-1\right)f^{\frac{q}{2}-2}|\nabla f|^2+2\left(f^{\frac{p}{2}-1}+f^{\frac{q}{2}-1}\right)\left(\left|\nabla\nabla u\right|^2+\mathrm{Ric}\left(\nabla u,\nabla u\right)\right)\\
        &+2\langle\nabla\left(\Delta_pu+\Delta_qu\right),\nabla u\rangle.
    \end{align*}
    On the other hand, by \eqref{1}, we have
\begin{align*}
\Delta_pu+\Delta_qu=-h(u,f).
\end{align*}
Combining above, we finish the proof of Lemma \ref{lemma4.1}.

\end{proof}

Using Lemma \ref{lemma4.1}, we can establish the following Lemma:

\begin{lemma}\label{lemma4.2}
    Let $(M,~g)$ be a complete Riemannian manifold with $\mathrm{Ric}\ge-(n-1)\kappa$ ($\kappa\ge0$), and let $u$ be a solution of equation \eqref{1} in $\Omega\subset M$. Assume that $h\in C^1(\mathbb{R}\times\mathbb{R}^+)$ satisfies
    \begin{align*}
        \left(\frac{\partial h}{\partial y}(x,y)\right)^2\le\mu^2 y^{r-2},
    \end{align*}
    where $r>q-1$ and $\mu>0$. Then, the following
    \begin{align}\label{4.1}
       \begin{split}
            \left(f^\frac{p}{2}+f^\frac{q}{2}\right)\mathcal{L}_{p,q}(f)\ge&-2(n-1)\kappa\left(f^\frac{p}{2}+f^\frac{q}{2}\right)^2+\frac{2}{n-1}fh^2(u,f)-2\mu|\nabla f|f^{\frac{r}{2}-\frac{1}{2}} \left(f^\frac{p}{2}+f^\frac{q}{2}\right)\\
        &-\frac{2(q-1)}{n-1}|\nabla f|f^{-\frac{1}{2}}\left(f^\frac{p}{2}+f^\frac{q}{2}\right)|h(u,f)|-2\frac{\partial h}{\partial x}(u,f)f\left(f^\frac{p}{2}+f^\frac{q}{2}\right)
       \end{split}
    \end{align}
    holds point-wisely in $\{x\in \Omega:~f(x)>0$$\}$.
\end{lemma}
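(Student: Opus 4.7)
\textbf{Proof plan for Lemma \ref{lemma4.2}.} The plan is to start from Lemma \ref{lemma4.1} and control each of its four terms. Let me work in a local orthonormal frame $\{e_1,\dots,e_n\}$ at a point where $f>0$, chosen so that $e_1=\nabla u/\sqrt{f}$. Then $f_i=2\sqrt{f}\,u_{1i}$ for every $i$, so $u_{11}=\langle\nabla f,\nabla u\rangle/(2f)$ and $\sum_{i\ge1}u_{1i}^2=|\nabla f|^2/(4f)$.

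The first main step is a refined Bochner-type lower bound on the Hessian. Separating the first row/column of $\nabla^2 u$ and applying Cauchy--Schwarz to the remaining $(n-1)\times(n-1)$ block gives
\begin{equation*}
|\nabla\nabla u|^2\;\ge\;\frac{|\nabla f|^2}{2f}-u_{11}^2+\frac{(\Delta u-u_{11})^2}{n-1}.
\end{equation*}
Multiplying by $2(f^{p/2-1}+f^{q/2-1})$ and inserting this into Lemma \ref{lemma4.1}, I combine the resulting $(f^{p/2-2}+f^{q/2-2})|\nabla f|^2$ contribution with the two $|\nabla f|^2$ terms already present in Lemma \ref{lemma4.1}. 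Since $u_{11}^2\le|\nabla f|^2/(4f)$, the leftover is at least $\frac{p-1}{2}f^{p/2-2}|\nabla f|^2+\frac{q-1}{2}f^{q/2-2}|\nabla f|^2\ge 0$ by the hypothesis $p,q>1$, so these terms can be discarded. The Ricci term is handled directly by $\mathrm{Ric}(\nabla u,\nabla u)\ge-(n-1)\kappa f$, producing the Ricci contribution on the right-hand side after multiplying by $(f^{p/2}+f^{q/2})$.

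The second step is to convert the $(\Delta u-u_{11})^2/(n-1)$ term into an $h^2$ term using the PDE. Expanding the $p$- and $q$-Laplacians gives
\begin{equation*}
(f^{p/2-1}+f^{q/2-1})\Delta u\;=\;-h(u,f)-\bigl[(p/2-1)f^{p/2-2}+(q/2-1)f^{q/2-2}\bigr]\langle\nabla f,\nabla u\rangle,
\end{equation*}
and using $\langle\nabla f,\nabla u\rangle=2fu_{11}$ one obtains the clean identity
\begin{equation*}
\Delta u-u_{11}\;=\;-\,\frac{h(u,f)+u_{11}\bigl[(p-1)f^{p/2-1}+(q-1)f^{q/2-1}\bigr]}{f^{p/2-1}+f^{q/2-1}}.
\end{equation*}
Applying the inequality $(a+b)^2\ge a^2-2|a||b|$, multiplying by $2(f^{p/2-1}+f^{q/2-1})/(n-1)$ and then by $(f^{p/2}+f^{q/2})=f(f^{p/2-1}+f^{q/2-1})$, and finally using $p-1\le q-1$ together with $|u_{11}|\le|\nabla f|/(2\sqrt{f})$, yields exactly $\frac{2}{n-1}fh^2(u,f)-\frac{2(q-1)}{n-1}|\nabla f|f^{-1/2}(f^{p/2}+f^{q/2})|h(u,f)|$.

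The third step handles the last term of Lemma \ref{lemma4.1}: Cauchy--Schwarz gives $|\langle\nabla f,\nabla u\rangle|\le|\nabla f|\sqrt{f}$, and the hypothesis $\bigl(\partial h/\partial y\bigr)^2\le\mu^2 y^{r-2}$ gives $|\partial h/\partial y(u,f)|\le\mu f^{r/2-1}$, so the term $-2\tfrac{\partial h}{\partial y}\langle\nabla f,\nabla u\rangle\cdot(f^{p/2}+f^{q/2})$ is bounded below by $-2\mu|\nabla f|f^{r/2-1/2}(f^{p/2}+f^{q/2})$. The $-2\partial_x h\cdot f$ term is simply multiplied by $(f^{p/2}+f^{q/2})$. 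Summing all contributions produces the asserted inequality. The only real obstacle is the bookkeeping in the second step: one must be careful that dropping $u_{11}^2$ in the expansion of $(\Delta u-u_{11})^2$ and combining the remaining $|h||u_{11}|$ cross term with $p-1\le q-1$ produces precisely the coefficient $\tfrac{2(q-1)}{n-1}$ claimed, with no parasitic terms of the wrong sign.
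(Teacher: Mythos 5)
Your proposal is correct and follows essentially the same route as the paper: the orthonormal frame with $e_1=\nabla u/\sqrt f$, the Hessian lower bound split into the first row plus a trace inequality on the lower block, substitution of the PDE for $\sum_{i\ge2}u_{ii}=\Delta u-u_{11}$, and the bounds $|u_{11}|\le|\nabla f|/(2\sqrt f)$, $p-1\le q-1$, and $|\partial_yh|\le\mu f^{r/2-1}$. Your Hessian estimate $|\nabla\nabla u|^2\ge\frac{|\nabla f|^2}{2f}-u_{11}^2+\frac{(\Delta u-u_{11})^2}{n-1}$ collapses to the paper's $|\nabla\nabla u|^2\ge\frac{|\nabla f|^2}{4f}+\frac{1}{n-1}\bigl(\sum_{i\ge2}u_{ii}\bigr)^2$ once you invoke $u_{11}^2\le|\nabla f|^2/(4f)$, so the two derivations coincide.
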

\begin{proof}
    Let $\{e_1,\cdots,e_n\}$ be an orthonormal frame of $TM^n$ on a domain with $f>0$ such that $e_1=\frac{\nabla u}{|\nabla u|}$. We hence infer that
\begin{align}\label{4.2}
\sum_{i=1}^nu_{1i}^2=\frac{|\nabla f|^2}{4f}\quad and\quad u_{11}=\frac{\langle\nabla f,\nabla u\rangle}{2f}
\end{align}
and
\begin{align}\label{4.3}
    \Delta_zu=f^{\frac{z}{2}-1}\left[(z-1)u_{11}+\sum_{i=2}^nu_{ii}\right].
\end{align}
By omitting some non-negative terms in $\left|\nabla\nabla u\right|^2$, we obtain
\begin{align*}
\left|\nabla\nabla u\right|^2\ge\sum_{i=1}^nu^2_{1i}+\sum_{i=2}^nu^2_{ii}.
\end{align*}
By \eqref{4.2} and Cauchy inequality, we have
\begin{align*}
    \left|\nabla\nabla u\right|^2\ge\frac{|\nabla f|^2}{4f}+\frac{1}{n-1}\left(\sum_{i=2}^nu_{ii}\right)^2.
\end{align*}
By using Lemma \ref{lemma4.1} and the above inequality, we have
\begin{align*}
    \mathcal{L}_{p,q}(f)\ge&\left(\frac{p}{2}-\frac{1}{2}\right)f^{\frac{p}{2}-2}|\nabla f|^2+\left(\frac{q}{2}-\frac{1}{2}\right)f^{\frac{q}{2}-2}|\nabla f|^2+2\left(f^{\frac{p}{2}-1}+f^{\frac{q}{2}-1}\right)\mathrm{Ric}\left(\nabla u,\nabla u\right)\\
&+\frac{2}{n-1}\left(f^{\frac{p}{2}-1}+f^{\frac{q}{2}-1}\right)\left(\sum_{i=2}^nu_{ii}\right)^2-2\frac{\partial h}{\partial x}(u,f)f-2\frac{\partial h}{\partial y}(u,f)\langle\nabla f,\nabla u\rangle.
\end{align*}
Since $\mathrm{Ric}\ge-(n-1)\kappa$ and $1<p\le q$, we arrive at
\begin{align}\label{4.4}
    \begin{split}
        \mathcal{L}_{p,q}(f)\ge&-2(n-1)\kappa\left(f^{\frac{p}{2}}+f^{\frac{q}{2}}\right)+\frac{2}{n-1}\left(f^{\frac{p}{2}-1}+f^{\frac{q}{2}-1}\right)\left(\sum_{i=2}^nu_{ii}\right)^2\\
&-2\frac{\partial h}{\partial x}(u,f)f-2\frac{\partial h}{\partial y}(u,f)\langle\nabla f,\nabla u\rangle.
    \end{split}
\end{align}
By \eqref{1} and \eqref{4.3}, we have
\begin{align*}
    \sum_{i=2}^nu_{ii}=-\left(f^{\frac{p}{2}-1}+f^{\frac{q}{2}-1}\right)^{-1}\left[(p-1)f^{\frac{p}{2}-1}u_{11}+(q-1)f^{\frac{q}{2}-1}u_{11}+h(u,f)\right].
\end{align*}
Hence, we obtain
\begin{align*}
    &\frac{2}{n-1}\left(f^{\frac{p}{2}-1}+f^{\frac{q}{2}-1}\right)\left(\sum_{i=2}^nu_{ii}\right)^2\\
    \ge&\frac{2}{n-1}\left(f^{\frac{p}{2}-1}+f^{\frac{q}{2}-1}\right)^{-1}\left\{h^2(u,f)+2h(u,f)\left[(p-1)f^{\frac{p}{2}-1}+(q-1)f^{\frac{q}{2}-1}\right]u_{11}\right\}\\
    \ge&\frac{2}{n-1}\left(f^{\frac{p}{2}-1}+f^{\frac{q}{2}-1}\right)^{-1}h^2(u,f)-\frac{4(q-1)}{n-1}|u_{11}||h(u,f)|.
\end{align*}
By \eqref{4.2}, we have
\begin{align*}
    \frac{2}{n-1}\left(f^{\frac{p}{2}-1}+f^{\frac{q}{2}-1}\right)\left(\sum_{i=2}^nu_{ii}\right)^2\ge\frac{2}{n-1}\left(f^{\frac{p}{2}-1}+f^{\frac{q}{2}-1}\right)^{-1}h^2(u,f)-\frac{2(q-1)}{n-1}f^{-\frac{1}{2}}|\nabla f||h(u,f)|.
\end{align*}
Substituting the above inequality, into \eqref{4.4}, we obtain
\begin{align}\label{4.5}
     \begin{split}
         \mathcal{L}_{p,q}(f)\ge&-2(n-1)\kappa\left(f^{\frac{p}{2}}+f^{\frac{q}{2}}\right)+\frac{2}{n-1}\left(f^{\frac{p}{2}-1}+f^{\frac{q}{2}-1}\right)^{-1}h^2(u,f)\\
     &-\frac{2(q-1)}{n-1}f^{-\frac{1}{2}}|\nabla f||h(u,f)|-2\frac{\partial h}{\partial x}(u,f)f-2\frac{\partial h}{\partial y}(u,f)\langle\nabla f,\nabla u\rangle.
     \end{split}
\end{align}
By using the assumptions of $h$, we arrive at
\begin{align*}
    -2\frac{\partial h}{\partial y}(u,f)\langle\nabla f,\nabla u\rangle\ge-2\left|\frac{\partial h}{\partial y}(u,f)\right|f^{\frac{1}{2}}|\nabla f|\ge-2\mu f^{\frac{r}{2}-\frac{1}{2}}|\nabla f|.
\end{align*}
Substituting the above inequality into \eqref{4.5}, we finish the proof of Lemma \ref{lemma4.2}.

\end{proof}

\subsection{Deducing the main integral inequality.}\

Now we choose a geodesic ball $\Omega=B_R(o)\subset M$. If we choose $\psi=f_\varepsilon^t\eta^2$ as a test function of \eqref{4.1}, where $\eta\in C_0^\infty(\Omega)$ is non-negative, $f_\varepsilon=(f-\varepsilon)^+$, $\varepsilon>0$, $t>1$ is to be determined later. It follows from \eqref{4.1} that
\begin{align}\label{4.6}
\begin{split}
&-\int_{\Omega}\left\langle f^{\frac{p}{2}-1}\nabla f+(p-2)f^{\frac{p}{2}-2}\langle\nabla f,\nabla u\rangle\nabla u,\nabla\left[f_\varepsilon^t\eta^2\left(f^\frac{p}{2}+f^\frac{q}{2}\right)\right]\right\rangle\\
&-\int_{\Omega}\left\langle f^{\frac{q}{2}-1}\nabla f+(q-2)f^{\frac{q}{2}-2}\langle\nabla f,\nabla u\rangle\nabla u,\nabla\left[f_\varepsilon^t\eta^2\left(f^\frac{p}{2}+f^\frac{q}{2}\right)\right]\right\rangle\\
\ge&-2(n-1)\kappa\int_{\Omega}\left(f^\frac{p}{2}+f^\frac{q}{2}\right)^2f_\varepsilon^t\eta^2+\frac{2}{n-1}\int_\Omega fh^2(u,f)f_\varepsilon^t\eta^2\\
&-2\int_\Omega\frac{\partial h}{\partial x}(u,f)f\left(f^\frac{p}{2}+f^\frac{q}{2}\right)f_\varepsilon^t\eta^2 -2\mu\int_\Omega|\nabla f|f^{\frac{r}{2}-\frac{1}{2}} \left(f^\frac{p}{2}+f^\frac{q}{2}\right)f_\varepsilon^t\eta^2\\
&-\frac{2(q-1)}{n-1}\int_\Omega |\nabla f|f^{-\frac{1}{2}}\left(f^\frac{p}{2}+f^\frac{q}{2}\right)|h(u,f)|f_\varepsilon^t\eta^2.
\end{split}
\end{align}
Direct computation shows that
\begin{align*}
    &-\int_{\Omega}\left\langle f^{\frac{z}{2}-1}\nabla f+(z-2)f^{\frac{z}{2}-2}\langle\nabla f,\nabla u\rangle\nabla u,\nabla\left[f_\varepsilon^t\eta^2\left(f^\frac{p}{2}+f^\frac{q}{2}\right)\right]\right\rangle\\
    =&-\int_\Omega\left(\frac{p}{2}f^{\frac{p+z}{2}-2}+\frac{q}{2}f^{\frac{q+z}{2}-2}\right)f_\varepsilon^t|\nabla f|^2\eta^2-(z-2)\int_\Omega\left(\frac{p}{2}f^{\frac{p+z}{2}-3}+\frac{q}{2}f^{\frac{q+z}{2}-3}\right)f_\varepsilon^t\langle\nabla f,\nabla u\rangle^2\eta^2\\
    &-t\int_\Omega\left(f^{\frac{p+z}{2}-1}+f^{\frac{q+z}{2}-1}\right)f_\varepsilon^{t-1}|\nabla f|^2\eta^2-(z-2)t\int_\Omega\left(f^{\frac{p+z}{2}-2}+f^{\frac{q+z}{2}-2}\right)f_\varepsilon^{t-1}\langle\nabla f,\nabla u\rangle^2\eta^2\\
    &-2\int_\Omega\left(f^{\frac{p+z}{2}-1}+f^{\frac{q+z}{2}-1}\right)f_\varepsilon^{t}\langle\nabla f,\nabla \eta\rangle\eta-2(z-2)\int_\Omega\left(f^{\frac{p+z}{2}-2}+f^{\frac{q+z}{2}-2}\right)f_\varepsilon^{t}\langle\nabla f,\nabla u\rangle\langle\nabla u,\nabla\eta\rangle\eta.
\end{align*}
By using absolute value inequality and $[(2-z)_+-1]<0$ ($z\in\{p,q\}$), we arrive at
\begin{align*}
    &-\int_{\Omega}\left\langle f^{\frac{z}{2}-1}\nabla f+(z-2)f^{\frac{z}{2}-2}\langle\nabla f,\nabla u\rangle\nabla u,\nabla\left[f_\varepsilon^t\eta^2\left(f^\frac{p}{2}+f^\frac{q}{2}\right)\right]\right\rangle\\
    \le&[(2-z)_+-1]\int_\Omega\left(\frac{p}{2}f^{\frac{p+z}{2}-2}+\frac{q}{2}f^{\frac{q+z}{2}-2}\right)f_\varepsilon^t|\nabla f|^2\eta^2\\
    &+[(2-z)_+-1]t\int_\Omega\left(f^{\frac{p+z}{2}-1}+f^{\frac{q+z}{2}-1}\right)f_\varepsilon^{t-1}|\nabla f|^2\eta^2\\
    &+2(1+|z-2|)\int_\Omega\left(f^{\frac{p+z}{2}-1}+f^{\frac{q+z}{2}-1}\right)f_\varepsilon^{t}|\nabla f||\nabla \eta|\eta\\
    \le&[(2-z)_+-1]t\int_\Omega\left(f^{\frac{p+z}{2}-1}+f^{\frac{q+z}{2}-1}\right)f_\varepsilon^{t-1}|\nabla f|^2\eta^2\\
    &+2(1+|z-2|)\int_\Omega\left(f^{\frac{p+z}{2}-1}+f^{\frac{q+z}{2}-1}\right)f_\varepsilon^{t}|\nabla f||\nabla \eta|\eta.
\end{align*}
Substituting the above inequality into \eqref{4.6}, we obtain
\begin{align}\label{4.7}
    &\frac{2}{n-1}\int_\Omega fh^2(u,f)f_\varepsilon^t\eta^2+[1-(2-p)_+]t\int_\Omega\left(f^{p-1}+f^{\frac{p+q}{2}-1}\right)f_\varepsilon^{t-1}|\nabla f|^2\eta^2\nonumber\\
    &+[1-(2-q)_+]t\int_\Omega\left(f^{\frac{p+q}{2}-1}+f^{q-1}\right)f_\varepsilon^{t-1}|\nabla f|^2\eta^2-2\int_\Omega\frac{\partial h}{\partial x}(u,f)f\left(f^\frac{p}{2}+f^\frac{q}{2}\right)f_\varepsilon^t\eta^2\nonumber\\
    \le&2(n-1)\kappa\int_{\Omega}\left(f^\frac{p}{2}+f^\frac{q}{2}\right)^2f_\varepsilon^t\eta^2+2\mu\int_\Omega|\nabla f|f^{\frac{r}{2}-\frac{1}{2}} \left(f^\frac{p}{2}+f^\frac{q}{2}\right)f_\varepsilon^t\eta^2\\
    &+\frac{2(q-1)}{n-1}\int_\Omega |\nabla f|f^{-\frac{1}{2}}\left(f^\frac{p}{2}+f^\frac{q}{2}\right)|h(u,f)|f_\varepsilon^t\eta^2\nonumber\\
    &+2(1+|p-2|)\int_\Omega\left(f^{p-1}+f^{\frac{p+q}{2}-1}\right)f_\varepsilon^{t}|\nabla f||\nabla \eta|\eta\nonumber\\
    &+2(1+|q-2|)\int_\Omega\left(f^{\frac{p+q}{2}-1}+f^{q-1}\right)f_\varepsilon^{t}|\nabla f||\nabla \eta|\eta.\nonumber
\end{align}
By Cauchy inequality, we have
\begin{align*}
    &2(1+|p-2|)\int_\Omega\left(f^{p-1}+f^{\frac{p+q}{2}-1}\right)f_\varepsilon^{t}|\nabla f||\nabla \eta|\eta\\
    &+2(1+|q-2|)\int_\Omega\left(f^{\frac{p+q}{2}-1}+f^{q-1}\right)f_\varepsilon^{t}|\nabla f||\nabla \eta|\eta\\
    \le&(4+3|p-2|+|q-2|)\int_\Omega f^{p-1}f_\varepsilon^{t}|\nabla f||\nabla \eta|\eta+(4+|p-2|+3|q-2|)\int_\Omega f^{q-1}f_\varepsilon^{t}|\nabla f||\nabla \eta|\eta\\
    \le&\frac{[1-(2-p)_+]t}{2}\int_\Omega f^{p-1}f_\varepsilon^{t-1}|\nabla f|^2\eta^2+\frac{(4+3|p-2|+|q-2|)^2}{2[1-(2-p)_+]t}\int_\Omega f^{p-1}f_\varepsilon^{t+1}|\nabla \eta|^2\\
    &+\frac{[1-(2-q)_+]t}{2}\int_\Omega f^{q-1}f_\varepsilon^{t-1}|\nabla f|^2\eta^2+\frac{(4+3|q-2|+|p-2|)^2}{2[1-(2-q)_+]t}\int_\Omega f^{q-1}f_\varepsilon^{t+1}|\nabla \eta|^2
\end{align*}
and
\begin{align*}
    &\frac{2(q-1)}{n-1}\int_\Omega |\nabla f|f^{-\frac{1}{2}}\left(f^\frac{p}{2}+f^\frac{q}{2}\right)|h(u,f)|f_\varepsilon^t\eta^2\\
    \le&\frac{1}{n-1}\int_\Omega fh^2(u,f)f_\varepsilon^t\eta^2+\frac{2(q-1)^2}{n-1}\int_\Omega (f^p+f^q)|\nabla f|^2f^{-2}f_\varepsilon^t\eta^2.
\end{align*}
Substituting the above inequalities into \eqref{4.7} and omitting some nonnegative terms, we arrive at
\begin{align}\label{4.8}
    &\frac{1}{n-1}\int_\Omega fh^2(u,f)f_\varepsilon^t\eta^2+\frac{[1-(2-p)_+]t}{2}\int_\Omega f^{p-1}f_\varepsilon^{t-1}|\nabla f|^2\eta^2\nonumber\\
    &+\frac{[1-(2-q)_+]t}{2}\int_\Omega f^{q-1}f_\varepsilon^{t-1}|\nabla f|^2\eta^2-2\int_\Omega\frac{\partial h}{\partial x}(u,f)f\left(f^\frac{p}{2}+f^\frac{q}{2}\right)f_\varepsilon^t\eta^2\nonumber\\
    \le&2(n-1)\kappa\int_{\Omega}\left(f^\frac{p}{2}+f^\frac{q}{2}\right)^2f_\varepsilon^t\eta^2+2\mu\int_\Omega|\nabla f|f^{\frac{r}{2}-\frac{1}{2}} \left(f^\frac{p}{2}+f^\frac{q}{2}\right)f_\varepsilon^t\eta^2\\
    &+\frac{(4+3|p-2|+|q-2|)^2}{2[1-(2-p)_+]t}\int_\Omega f^{p-1}f_\varepsilon^{t+1}|\nabla \eta|^2+\frac{(4+3|q-2|+|p-2|)^2}{2[1-(2-q)_+]t}\int_\Omega f^{q-1}f_\varepsilon^{t+1}|\nabla \eta|^2\nonumber\\
    &+\frac{2(q-1)^2}{n-1}\int_\Omega (f^p+f^q)|\nabla f|^2f^{-2}f_\varepsilon^t\eta^2.\nonumber
\end{align}

\textbf{Case 1}: Suppose that
\begin{align*}
\frac{\partial h}{\partial x}(x,y)\le0\quad \mbox{and}\quad h^2(x,y)\ge\lambda y^r,\quad(\lambda>0)
\end{align*}
then we can achieve that
\begin{align}\label{siu1}
-2\int_\Omega\frac{\partial h}{\partial x}(u,f)f\left(f^\frac{p}{2}+f^\frac{q}{2}\right)f_\varepsilon^t\eta^2\ge0
\end{align}
and
\begin{align}\label{4.9}
\frac{1}{n-1}\int_\Omega fh^2(u,f)f_\varepsilon^t\eta^2\ge\frac{\lambda}{n-1}\int_\Omega f^{r+1}f_\varepsilon^t\eta^2.
\end{align}
Moreover, by Cauchy inequality we have
\begin{align}\label{4.10}
\begin{split}
&2\mu\int_\Omega|\nabla f|f^{\frac{r}{2}-\frac{1}{2}} \left(f^\frac{p}{2}+f^\frac{q}{2}\right)f_\varepsilon^t\eta^2\\
\le&\frac{\lambda}{2(n-1)}\int_\Omega f^{r+1}f_\varepsilon^t\eta^2+\frac{4(n-1)\mu^2}{\lambda}\int_\Omega (f^p+f^q)f^{-2}f^t_\varepsilon
|\nabla f|^2\eta^2.
\end{split}
\end{align}
Now, substituting \eqref{siu1}, \eqref{4.9} and \eqref{4.10} into \eqref{4.8} and letting $\varepsilon\rightarrow 0^+$, we obtain
\begin{align*}
&\frac{\lambda}{2(n-1)}\int_\Omega f^{t+r+1}\eta^2+\left(\frac{[1-(2-p)_+]t}{2}-\frac{4(n-1)\mu^2}{\lambda}-\frac{2(q-1)^2}{n-1}\right)\int_\Omega f^{t+p-2}|\nabla f|^2\eta^2\\
&+\left(\frac{[1-(2-q)_+]t}{2}-\frac{4(n-1)\mu^2}{\lambda}-\frac{2(q-1)^2}{n-1}\right)\int_\Omega f^{t+q-2}|\nabla f|^2\eta^2\\
\le&4(n-1)\kappa\int_{\Omega}\left(f^{t+p}+f^{t+q}\right)\eta^2+\frac{(4+3|p-2|+|q-2|)^2}{2[1-(2-p)_+]t}\int_\Omega f^{t+p}|\nabla \eta|^2\\
&+\frac{(4+3|q-2|+|p-2|)^2}{2[1-(2-q)_+]t}\int_\Omega f^{t+q}|\nabla \eta|^2.
\end{align*}
Now, we choose $\overline{t}_0=\overline{t}_0(n,p,q,\mu)$ large enough, such that
\begin{align*}
\frac{[1-(2-p)_+]t}{4}-\frac{4(n-1)\mu^2}{\lambda}-\frac{2(q-1)^2}{n-1}\ge0
\end{align*}
and
\begin{align*}
\frac{[1-(2-q)_+]t}{4}-\frac{4(n-1)\mu^2}{\lambda}-\frac{2(q-1)^2}{n-1}\ge0
\end{align*}
hold true for any $t\ge\overline{t}_0$. Hence, for any $t\ge\overline{t}_0$, the following inequality holds true
\begin{align*}
&\frac{\lambda}{2(n-1)}\int_\Omega f^{t+r+1}\eta^2+\frac{[1-(2-p)_+]t}{4}\int_\Omega f^{t+p-2}|\nabla f|^2\eta^2+\frac{[1-(2-q)_+]t}{4}\int_\Omega f^{t+q-2}|\nabla f|^2\eta^2\\
\le&4(n-1)\kappa\int_{\Omega}\left(f^{t+p}+f^{t+q}\right)\eta^2+\frac{(4+3|p-2|+|q-2|)^2}{2[1-(2-p)_+]t}\int_\Omega f^{t+p}|\nabla \eta|^2\\
&+\frac{(4+3|q-2|+|p-2|)^2}{2[1-(2-q)_+]t}\int_\Omega f^{t+q}|\nabla \eta|^2.
\end{align*}
\medskip

\noindent\textbf{Case 2:} Suppose that
\begin{align*}
\frac{\partial h}{\partial x}(x,y)\le-\lambda y^{r-\frac{q}{2}}\quad(\lambda>0),
\end{align*}
then we can achieve that
\begin{align}\label{siu2}
-2\int_\Omega\frac{\partial h}{\partial x}(u,f)f\left(f^\frac{p}{2}+f^\frac{q}{2}\right)f_\varepsilon^t\eta^2\ge2\lambda\int_\Omega f^{r+1}f_\varepsilon^t\eta^2.
\end{align}

By Cauchy inequality, we have
\begin{align}\label{siu3}
\begin{split}
2\mu\int_\Omega|\nabla f|f^{\frac{r}{2}-\frac{1}{2}} \left(f^\frac{p}{2}+f^\frac{q}{2}\right)f_\varepsilon^t\eta^2\le\lambda\int_\Omega f^{r+1}f_\varepsilon^t\eta^2+\frac{2\mu^2}{\lambda}\int_\Omega (f^p+f^q)f^{-2}f^t_\varepsilon|\nabla f|^2\eta^2.
\end{split}
\end{align}
Substituting \eqref{siu2} and \eqref{siu3} into \eqref{4.8} and letting $\varepsilon\rightarrow 0^+$, we obtain
\begin{align*}
&\lambda\int_\Omega f^{t+r+1}\eta^2+\left(\frac{[1-(2-p)_+]t}{2}-\frac{2\mu^2}{\lambda}-\frac{2(q-1)^2}{n-1}\right)\int_\Omega f^{t+p-2}|\nabla f|^2\eta^2\\
&+\left(\frac{[1-(2-q)_+]t}{2}-\frac{2\mu^2}{\lambda}-\frac{2(q-1)^2}{n-1}\right)\int_\Omega f^{t+q-2}|\nabla f|^2\eta^2\\
\le&4(n-1)\kappa\int_{\Omega}\left(f^{t+p}+f^{t+q}\right)\eta^2+\frac{(4+3|p-2|+|q-2|)^2}{2[1-(2-p)_+]t}\int_\Omega f^{t+p}|\nabla \eta|^2\\
&+\frac{(4+3|q-2|+|p-2|)^2}{2[1-(2-q)_+]t}\int_\Omega f^{t+q}|\nabla \eta|^2.
\end{align*}
Now we pick $\overline{t}_0=\overline{t}_0(n,p,q,\mu)$ large enough, such that
\begin{align*}
    \frac{[1-(2-p)_+]t}{4}-\frac{2\mu^2}{\lambda}-\frac{2(q-1)^2}{n-1}\ge0
\end{align*}
and
\begin{align*}
\frac{[1-(2-q)_+]t}{4}-\frac{2\mu^2}{\lambda}-\frac{2(q-1)^2}{n-1}\ge0
\end{align*}
hold true for any $t\ge\overline{t}_0$. Hence, it is easy to see that, for any $t\ge\overline{t}_0$, the following inequality holds true
\begin{align*}
&\lambda\int_\Omega f^{t+r+1}\eta^2+\frac{[1-(2-p)_+]t}{4}\int_\Omega f^{t+p-2}|\nabla f|^2\eta^2+\frac{[1-(2-q)_+]t}{4}\int_\Omega f^{t+q-2}|\nabla f|^2\eta^2\\
\le&4(n-1)\kappa\int_{\Omega}\left(f^{t+p}+f^{t+q}\right)\eta^2+\frac{(4+3|p-2|+|q-2|)^2}{2[1-(2-p)_+]t}\int_\Omega f^{t+p}|\nabla \eta|^2\\
&+\frac{(4+3|q-2|+|p-2|)^2}{2[1-(2-q)_+]t}\int_\Omega f^{t+q}|\nabla \eta|^2.
\end{align*}

Combining above, we can achieve the following lemma
\begin{lemma}\label{lemma4.3}
Let $(M,~g)$ be a complete Riemannian manifold with $\mathrm{Ric}\ge-(n-1)\kappa$ ($\kappa\ge0$), and let $u$ be a solution of equation \eqref{1} in $\Omega\subset M$. Assume that $h\in C^1(\mathbb{R}\times\mathbb{R}^+)$ satisfies one of the following two conditions
\begin{itemize}
\item $\frac{\partial h}{\partial x}(x,y)\le0$,\quad $\left(\frac{\partial h}{\partial y}(x,y)\right)^2\le\mu^2 y^{r-2}$ \quad\mbox{and}\quad $h^2(x,y)\ge\lambda y^r$;

\item $\frac{\partial h}{\partial x}(x,y)\le-\lambda y^{r-\frac{q}{2}}$ \quad\mbox{and}\quad $\left(\frac{\partial h}{\partial y}(x,y)\right)^2\le\mu^2 y^{r-2}$,
\end{itemize}
where $r>q-1$, $\lambda>0$ and $\mu>0$. Then, there exists $\overline{t}_0=\overline{t}_0(n,p,q,\mu)>0$, such that for any $t\ge\overline{t}_0$, the following inequality
\begin{align*}
&\int_\Omega f^{t+r+1}\eta^2+t\int_\Omega f^{t+p-2}|\nabla f|^2\eta^2+t\int_\Omega f^{t+q-2}|\nabla f|^2\eta^2\\
\le&\mathcal{C}\kappa\int_{\Omega}\left(f^{t+p}+f^{t+q}\right)\eta^2+\frac{\mathcal{C}}{t}\int_\Omega( f^{t+p}+f^{t+q})|\nabla \eta|^2
\end{align*}
holds true, where $\mathcal{C}=\mathcal{C}(n,p,q,\lambda)>0$.
\end{lemma}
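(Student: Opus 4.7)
The plan is to use the pointwise estimate in Lemma \ref{lemma4.2} as the starting point and integrate against a suitable test function that generates the $f^{t+r+1}$, $f^{t+p-2}|\nabla f|^2$ and $f^{t+q-2}|\nabla f|^2$ terms on the left. Concretely, I would multiply \eqref{4.1} by $f_\varepsilon^t\eta^2$, where $f_\varepsilon=(f-\varepsilon)^+$, $\eta\in C_0^\infty(\Omega)$ is non-negative, $t\ge \bar t_0$ is a large parameter to be fixed, and then rewrite the term $(f^{p/2}+f^{q/2})\mathcal{L}_{p,q}(f)$ as a divergence in weak form. This reduces to a single global inequality of the shape
\begin{align*}
\int_\Omega \bigl(\text{main positive terms}\bigr) \le \int_\Omega \bigl(\text{error terms involving } \kappa,\ \nabla\eta,\ h_x,\ h_y\bigr),
\end{align*}
which is exactly the template of the desired estimate.

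First I would compute the divergence pairing coming from $\langle f^{z/2-1}\nabla f+(z-2)f^{z/2-2}\langle\nabla f,\nabla u\rangle\nabla u,\nabla[f_\varepsilon^t\eta^2(f^{p/2}+f^{q/2})]\rangle$ for $z=p$ and $z=q$. The key algebraic observation (which the authors exploit) is that the coefficient $[(2-z)_+-1]$ is strictly negative, so dropping the non-$t$-weighted $f^{p/2+z/2-2}|\nabla f|^2$ piece only strengthens the inequality and leaves a clean lower bound $[1-(2-p)_+]t\int f^{p-1}f_\varepsilon^{t-1}|\nabla f|^2\eta^2$ (analogously for $q$). The remaining cross term $2(1+|z-2|)\int(f^{(p+z)/2-1}+f^{(q+z)/2-1})f_\varepsilon^t|\nabla f||\nabla\eta|\eta$ is absorbed into half of the good $|\nabla f|^2$ term by a standard Cauchy inequality weighted by $1/t$.

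Next I would handle the right-hand side of \eqref{4.1}. The Ricci term contributes $2(n-1)\kappa(f^{p/2}+f^{q/2})^2 f_\varepsilon^t\eta^2$, which becomes the $\kappa(f^{t+p}+f^{t+q})\eta^2$ piece after distributing the square. The gradient term $2\mu|\nabla f|f^{r/2-1/2}(f^{p/2}+f^{q/2})f_\varepsilon^t\eta^2$ and the cross term $\tfrac{2(q-1)}{n-1}|\nabla f|f^{-1/2}(f^{p/2}+f^{q/2})|h|f_\varepsilon^t\eta^2$ will be split by Cauchy inequality into a small multiple of $\int f h^2 f_\varepsilon^t\eta^2$ (or of $\int f^{r+1}f_\varepsilon^t\eta^2$ in Case 2) plus a term of the form $\int(f^p+f^q)f^{-2}|\nabla f|^2 f_\varepsilon^t\eta^2$. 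The latter is absorbed by the good $|\nabla f|^2$ terms provided $t$ is chosen large enough in terms of $n,p,q,\mu,\lambda$; this is precisely the role of the threshold $\bar t_0$.

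The case split is the only place where the two hypotheses on $h$ are used. In Case 1, $h_x\le 0$ gives a non-negative contribution that is dropped, and the coercivity $h^2\ge \lambda f^r$ converts $\int f h^2 f_\varepsilon^t\eta^2$ into $\lambda\int f^{r+1}f_\varepsilon^t\eta^2$. In Case 2, the integrand $-2h_x f(f^{p/2}+f^{q/2})f_\varepsilon^t\eta^2$ is itself bounded below by $2\lambda\int f^{r+1}f_\varepsilon^t\eta^2$. After absorption and letting $\varepsilon\to 0^+$ via monotone convergence, both cases collapse to the asserted inequality with a single constant $\mathcal{C}=\mathcal{C}(n,p,q,\lambda)$. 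The main technical obstacle is bookkeeping: one must make sure that the Cauchy constants chosen to absorb the $|\nabla f|^2$ error terms are compatible with a single choice of $\bar t_0$ depending only on $n,p,q,\mu$ (and not on $\eta$, $\kappa$, or $R$), so that the final structural inequality comes out with coefficients of the form $t$ on the left and $\mathcal{C}/t$ on the right.
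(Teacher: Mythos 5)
Your proposal is correct and follows essentially the same path as the paper: integrate the pointwise estimate of Lemma \ref{lemma4.2} against $f_\varepsilon^t\eta^2$, treat $\mathcal{L}_{p,q}(f)$ weakly with the effective test function $f_\varepsilon^t\eta^2(f^{p/2}+f^{q/2})$, exploit $[(2-z)_+-1]<0$ to discard the non-$t$-weighted $|\nabla f|^2$ term and keep the $t$-weighted one, absorb the $|\nabla f||\nabla\eta|$ and the Ricci/$h_y$/$h$ cross terms by Cauchy with weight $1/t$, run the two hypotheses on $h$ in parallel to produce the coercive $\int f^{t+r+1}\eta^2$ term, fix $\bar t_0$ so the absorption closes, and let $\varepsilon\to 0^+$. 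One small point worth noting: your remark that $\bar t_0$ must depend on $\lambda$ as well as $n,p,q,\mu$ is actually more accurate than the lemma's stated dependence $\bar t_0(n,p,q,\mu)$, since the absorption thresholds in both cases involve $\mu^2/\lambda$.
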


\subsection{$L^{\beta_1}$-bound of gradient for the solutions of \eqref{1} in a geodesic ball}\

By using Lemma \ref{lemma4.3}, we can achieve the following lemma:
\begin{lemma}\label{lemma4.4}
Let $(M,~g)$ be a complete Riemannian manifold with $\mathrm{Ric}\ge-(n-1)\kappa$ ($\kappa\ge0$, $n\ge3$), and let $u$ be a solution of equation \eqref{1} in $\Omega=B(o,R)\subset M$. Assume that $h\in C^1(\mathbb{R}\times\mathbb{R}^+)$ satisfies one of the following two conditions
\begin{itemize}
\item $\frac{\partial h}{\partial x}(x,y)\le0$,\quad $\left(\frac{\partial h}{\partial y}(x,y)\right)^2\le\mu^2 y^{r-2}$ \quad\mbox{and}\quad $h^2(x,y)\ge\lambda y^r$;

\item $\frac{\partial h}{\partial x}(x,y)\le-\lambda y^{r-\frac{q}{2}}$ \quad\mbox{and}\quad $\left(\frac{\partial h}{\partial y}(x,y)\right)^2\le\mu^2 y^{r-2}$,
\end{itemize}
where $r>q-1$, $\lambda>0$ and $\mu>0$. Let $R>\delta>0$ and
    \begin{align*}
        \mathcal{C}_2(1+\kappa R^2)\le t_0\le\mathcal{C}_3(1+\kappa R^2),
    \end{align*}
then the following estimate holds true
\begin{align*}
\|f\|_{L^{\beta_1}(\Omega_0)}\le\mathcal{C}_0\left[\left(\frac{1+\kappa R^2}{R^2}\right)^\frac{1}{r-p+1}+\left(\frac{1+\kappa R^2}{R^2}\right)^\frac{1}{r-q+1}\right]V^{\frac{1}{\beta_1}},
\end{align*}
where $\Omega_0=B(o,\frac{3}{4}R)$, $\beta_1=\frac{n(t_0+p)}{n-2}$, $\mathcal{C}_i=\mathcal{C}_i(n,p,q,\lambda,r)$ ($i=2,3$) and $\mathcal{C}_0=\mathcal{C}_0(n,p,q,\lambda,r,\delta)$.
\end{lemma}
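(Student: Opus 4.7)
The plan is a one-step Moser-type estimate that couples Lemma \ref{lemma4.3} with Saloff-Coste's Sobolev inequality. The precise window $t_0\asymp 1+\kappa R^2$ in the hypothesis is exactly what makes all auxiliary constants absorbable after taking the $\beta_1$-th root.

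First I would fix a cutoff $\eta\in C_0^{\infty}(B_R(o))$ with $\eta\equiv 1$ on $\Omega_0=B_{3R/4}(o)$, $0\le\eta\le 1$, and $|\nabla\eta|\le C/R$, then apply Lemma \ref{lemma4.3} with $t=t_0$ and this $\eta$. Since $r+1>q\ge p$, I would use the pointwise Young inequality
\begin{equation*}
\lambda f^{t_0+z}\le \epsilon f^{t_0+r+1}+C_{\epsilon}\lambda^{(t_0+r+1)/(r-z+1)},\qquad z\in\{p,q\},
\end{equation*}
with $\lambda=\mathcal{C}\kappa$ and $\lambda=\mathcal{C}/(t_0 R^2)$ and an $\epsilon$ small enough to be absorbed into the left-hand side. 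Using $\kappa\le (1+\kappa R^2)/R^2$ and $1/(t_0R^2)\le (1+\kappa R^2)/R^2$ (both from $t_0\ge 1$), this yields
\begin{equation*}
\int f^{t_0+r+1}\eta^2+t_0\int(f^{t_0+p-2}+f^{t_0+q-2})|\nabla f|^2\eta^2\le \mathcal{C}'\Big[\Big(\tfrac{1+\kappa R^2}{R^2}\Big)^{\frac{t_0+r+1}{r-p+1}}+\Big(\tfrac{1+\kappa R^2}{R^2}\Big)^{\frac{t_0+r+1}{r-q+1}}\Big]V.
\end{equation*}

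Next I would apply Saloff-Coste's Sobolev inequality to $h=f^{(t_0+p)/2}\eta$ (regularizing by $(f-\varepsilon)_+$ first and passing to the limit if needed). Expanding $|\nabla h|^2\le \tfrac{(t_0+p)^2}{2}f^{t_0+p-2}|\nabla f|^2\eta^2+2f^{t_0+p}|\nabla\eta|^2$, the main gradient term is controlled by the estimate just obtained (after absorbing the net factor $(t_0+p)^2/t_0\sim t_0$), and the remaining $f^{t_0+p}$ pieces are handled either by H\"older against $\int f^{t_0+r+1}\eta^2$ or by a second Young absorption, producing the same right-hand side structure. Taking the $1/\beta_1$-th root with $\beta_1=n(t_0+p)/(n-2)$, the exponent on $(1+\kappa R^2)/R^2$ becomes $\tfrac{t_0+r+1}{(t_0+p)(r-z+1)}=\tfrac{1}{r-z+1}+\tfrac{1}{t_0+p}$; the leftover $((1+\kappa R^2)/R^2)^{1/(t_0+p)}$ combines with $R^{2/(t_0+p)}$ to give $(1+\kappa R^2)^{1/(t_0+p)}$, which is bounded since $t_0\asymp 1+\kappa R^2$ keeps $\log(1+\kappa R^2)/(t_0+p)$ uniformly small. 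Similarly, $(1+\sqrt{\kappa}R)^2\le 2(1+\kappa R^2)\asymp \beta_1$ ensures that $[e^{C_n(1+\sqrt{\kappa}R)}]^{1/\beta_1}$ and $t_0^{1/\beta_1}$ stay bounded. Rolling these factors into $\mathcal{C}_0=\mathcal{C}_0(n,p,q,\lambda,r,\delta)$ gives the claim.

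The main obstacle is the delicate bookkeeping required to verify that every constant produced by Young and Saloff-Coste stays bounded after the $1/\beta_1$-root is taken. This is possible only because $t_0$ lies in the narrow window $\mathcal{C}_2(1+\kappa R^2)\le t_0\le \mathcal{C}_3(1+\kappa R^2)$, matching the $(1+\kappa R^2)$ scale of the Sobolev exponent $\beta_1=n(t_0+p)/(n-2)$. The $\delta$-dependence enters precisely through auxiliary factors such as $R^{2/(t_0+p)}$ in the bounded regime where both $R$ and $t_0$ are close to their minima, and it drops out as $R\to\infty$, in line with the remark following Theorem \ref{theorem1.2}.
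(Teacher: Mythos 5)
Your overall strategy (Lemma \ref{lemma4.3} $\to$ Young-type absorption of the $\kappa$ and $R^{-2}$ terms into $\int f^{t_0+r+1}\eta^2$ $\to$ Saloff-Coste on $f^{(t_0+p)/2}\eta$ $\to$ take the $1/\beta_1$-th root and let $t_0\asymp 1+\kappa R^2$ control all constants) matches the paper, and your exponent bookkeeping at the end is also correct, including the role of $R>\delta$.

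However, there is a genuine gap in the step where you absorb the $|\nabla\eta|^2$ terms. You take a \emph{standard} cutoff ($\eta\equiv1$ on $B_{3R/4}$, $|\nabla\eta|\le C/R$) and then apply Young with $\lambda=\mathcal{C}/(t_0R^2)$ to $\frac{\mathcal{C}}{t_0}\int f^{t_0+z}|\nabla\eta|^2$. But $|\nabla\eta|$ is supported on the annulus $B_R\setminus B_{3R/4}$, where $\eta<1$, so Young gives you
\begin{equation*}
\frac{\mathcal{C}}{t_0}\int_\Omega f^{t_0+z}|\nabla\eta|^2\;\le\;\epsilon\int_{\{|\nabla\eta|\ne0\}}f^{t_0+r+1}\;+\;C_\epsilon(\cdots)V,
\end{equation*}
and the first integral is \emph{not} dominated by $\int_\Omega f^{t_0+r+1}\eta^2$: on the annulus the weight $\eta^2$ can be arbitrarily small while $f^{t_0+r+1}$ is not controlled. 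The same mismatch breaks the H\"older variant you mention (you would need $|\nabla\eta|^2/\eta^{2\theta}$ bounded for a suitable $\theta$, which fails for a standard cutoff). This is precisely why the paper does not use a plain cutoff: it sets $\eta=\eta_1^{(t_0+r+1)/(r-q+1)}$ for a standard $\eta_1$, so that $|\nabla\eta|^2=\bigl(\tfrac{t_0+r+1}{r-q+1}\bigr)^2\eta_1^{2(t_0+q)/(r-q+1)}|\nabla\eta_1|^2$ carries a high power of $\eta_1$ that interpolates exactly between $\eta^2=\eta_1^{2(t_0+r+1)/(r-q+1)}$ and $1$. Applying Young to $f^{t_0+z}\,\eta_1^{2(t_0+q)/(r-q+1)}\cdot|\nabla\eta_1|^2$ then produces $\frac{t_0}{4}\int f^{t_0+r+1}\eta^2$ (with the \emph{correct} weight $\eta^2$, hence absorbable) plus a term involving only $\eta_1$ and $|\nabla\eta_1|$, as in the paper's display \eqref{4.18}--\eqref{4.19}. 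Without this tailored power cutoff the absorption step you rely on does not go through, so the proof needs to be repaired at that point.
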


\begin{proof}
By Lemma \ref{lemma4.3}, we have
\begin{align}\label{4.11}
\begin{split}
\int_\Omega f^{t+r+1}\eta^2+t\int_\Omega f^{t+p-2}|\nabla f|^2\eta^2\le\mathcal{C}\kappa\int_{\Omega}\left(f^{t+p}+f^{t+q}\right)\eta^2+\frac{\mathcal{C}}{t}\int_\Omega( f^{t+p}+f^{t+q})|\nabla \eta|^2,
\end{split}
\end{align}
where $t\ge\overline{t}_0(n,p,q,\mu)$ and $\mathcal{C}=\mathcal{C}(n,p,q,\lambda)>0$.

On the other hand, we have
\begin{align*}
\left|\nabla\right(f^{\frac{t+p}{2}}\eta\left)\right|^2&=\left|\frac{t+p}{2}f^{\frac{t+p}{2}-1}\eta\nabla f+f^{\frac{t+p}{2}}\nabla\eta\right|^2\\
&\le\frac{(t+p)^2}{2}f^{t+p-2}\left|\nabla f\right|^2\eta^2+2f^{t+p}\left|\nabla\eta\right|^2.
\end{align*}
Substituting the above inequality into \eqref{4.11}, we have
\begin{align*}
\int_\Omega f^{t+r+1}\eta^2+\frac{1}{t} \int_\Omega\left|\nabla\right(f^{\frac{t+p}{2}}\eta\left)\right|^2 \le\mathcal{C}\kappa\int_{\Omega}\left(f^{t+p}+f^{t+q}\right)\eta^2+\frac{\mathcal{C}}{t}\int_\Omega( f^{t+p}+f^{t+q})|\nabla \eta|^2,
\end{align*}
where $t\ge\overline{t}_0(n,p,q,\mu)$ and $\mathcal{C}=\mathcal{C}(n,p,q,\lambda)>0$.

Now, by using Saloff-Coste's Sobolev inequality, we arrive at
\begin{align}\label{4.12}
\begin{split}
&\int_\Omega f^{t+r+1}\eta^2+\frac{1}{t}\exp\left\{-C_n(1+\sqrt{\kappa}R)\right\} V^\frac{2}{n}R^{-2}\left\|f^{\frac{t+p}{2}} \eta\right\|^2_{L^\frac{2n}{n-2}(\Omega)}\\
\le&\left(\mathcal{C}\kappa +\frac{1}{tR^2}\right)\int_{\Omega}f^{t+p}\eta^2+\frac{\mathcal{C}}{t}\int_\Omega f^{t+p}|\nabla \eta|^2+\mathcal{C}\kappa \int_{\Omega}f^{t+q}\eta^2+\frac{\mathcal{C}}{t}\int_\Omega f^{t+q}|\nabla \eta|^2.
\end{split}
\end{align}
Hence, by choosing $t_0$ large enough, we have
\begin{align}\label{4.13}
\begin{split}
&t_0\int_\Omega f^{t_0+r+1}\eta^2+\exp\left\{-C_n(1+\sqrt{\kappa}R)\right\} V^\frac{2}{n}R^{-2}\left\|f^{\frac{t_0+p}{2}}\eta\right\|^2_{L^\frac{2n}{n-2}(\Omega)}\\
\le&\left(\mathcal{C}\kappa t_0+\frac{1}{R^2}\right)\int_{\Omega}f^{t_0+p}\eta^2+\mathcal{C}\int_\Omega f^{t_0+p}|\nabla \eta|^2+\mathcal{C}\kappa t_0\int_{\Omega}f^{t_0+q}\eta^2+\mathcal{C}\int_\Omega f^{t_0+q}|\nabla \eta|^2.
\end{split}
\end{align}
Furthermore, we let $0\le\eta\le1$. Denote
\begin{align*}
\widehat{\Omega}_1=\left\{x\in\Omega:~f\ge\left(4\mathcal{C}\kappa+\frac{4}{t_0R^2}\right)^\frac{1}{r-p+1}\right\},
\end{align*}
we have
\begin{align}\label{4.14}
\left(\mathcal{C}\kappa t_0+\frac{1}{R^2}\right)\int_{\widehat{\Omega}_1}f^{t_0+p}\eta^2\le\frac{t_0}{4}\int_\Omega f^{t_0+r+1}\eta^2
\end{align}
and
\begin{align}\label{4.15}
\left(\mathcal{C}\kappa t_0+\frac{1}{R^2}\right)\int_{\Omega\setminus\widehat{\Omega}_1}f^{t_0+p}\eta^2 \le\left(\frac{4}{t_0}\right)^{\frac{t_0+p}{r-p+1}}\left(\mathcal{C}\kappa t_0+\frac{1}{R^2}\right)^{\frac{t_0+r+1}{r-p+1}}V.
\end{align}
Denote
\begin{align*}
\widehat{\Omega}_2=\left\{x\in\Omega:~f\ge\left(4\mathcal{C}\kappa\right)^\frac{1}{r-q+1}\right\},
\end{align*}
we have
\begin{align}\label{4.16}
    \mathcal{C}\kappa t_0\int_{\widehat{\Omega}_2}f^{t_0+q}\eta^2\le\frac{t_0}{4}\int_\Omega f^{t_0+r+1}\eta^2
\end{align}
and
\begin{align}\label{4.17}
    \mathcal{C}\kappa t_0\int_{\Omega\setminus\widehat{\Omega}_2}f^{t_0+q}\eta^2\le t_04^{\frac{t_0+q}{r-q+1}}\left(\mathcal{C}\kappa\right)^\frac{t_0+r+1}{r-q+1}V.
\end{align}

We denote $\Omega_0=B(o,\frac{3R}{4})$ and choose $\eta_1\in C_0^\infty(\Omega)$ satisfying
\begin{align*}
\begin{cases}
0\le\eta_1\le1,\quad\eta_1 \equiv1\quad in~\Omega_0;\\[3mm]
|\nabla\eta_1|\le\frac{C}{R}.
\end{cases}
\end{align*}
Let
$$\eta=\eta_1^\frac{t_0+r+1}{r-q+1}.$$
Then, by using Young inequality and the structure of $\eta$, we have
\begin{align}\label{4.18}
\mathcal{C}\int_\Omega f^{t_0+p}|\nabla \eta|^2=&\mathcal{C}\frac{(t_0+r+1)^2}{(r-q+1)^2}\int_\Omega f^{t_0+p} \eta_1^\frac{2(t_0+q)}{r-q+1}|\nabla\eta_1|^2\nonumber\\
\le&\frac{t_0}{4}\int_\Omega f^{t_0+r+1}\eta^2+\frac{r-p+1}{t_0+r+1}\left[\frac{\mathcal{C}(t_0+r+1)^2}{(r-q+1)^2}\right]^\frac{t_0+r+1}{r-p+1}\left[\frac{4(t_0+p)}{t_0(t_0+r+1)}\right]^\frac{t_0+p}{r-p+1}\nonumber\\
&\cdot\int_\Omega \eta_1^{\frac{2(q-p)(t_0+r+1)}{(r-q+1)(r-p+1)}}|\nabla\eta_1|^\frac{2(t_0+r+1)}{r-p+1}\\
\le&\frac{t_0}{4}\int_\Omega f^{t_0+r+1}\eta^2+\frac{\mathcal{C}_1}{t_0}\left(\mathcal{C}\mathcal{C}_1t_0^2\right)^\frac{t_0+r+1}{r-p+1}\left(\frac{\mathcal{C}_1}{t_0}\right)^\frac{t_0+p}{r-p+1}\int_\Omega |\nabla\eta_1|^\frac{2(t_0+r+1)}{r-p+1}\nonumber\\
\le&\frac{t_0}{4}\int_\Omega f^{t_0+r+1}\eta^2+\mathcal{C}^\frac{t_0+r+1}{r-p+1} \mathcal{C}_1^\frac{2t_0+2r+2}{r-p+1}t_0^\frac{t_0+r+1}{r-p+1}\left(\frac{C}{R}\right)^\frac{2(t_0+r+1)}{r-p+1}V,\nonumber
\end{align}
where $\mathcal{C}_1=\mathcal{C}_1(p,q,r)>0$. Similar to the proof of \eqref{4.18}, we have
\begin{align}\label{4.19}
\mathcal{C}\int_\Omega f^{t_0+q}|\nabla \eta|^2\le\frac{t_0}{4}\int_\Omega f^{t_0+r+1}\eta^2+\mathcal{C}^\frac{t_0+r+1}{r-q+1}\mathcal{C}_1^\frac{2t_0+2r+2}{r-q+1}t_0^\frac{t_0+r+1}{r-q+1}\left(\frac{C}{R}\right)^\frac{2(t_0+r+1)}{r-q+1}V.
\end{align}
Substituting $\eta=\eta_1^\frac{t_0+r+1}{r-q+1}$, \eqref{4.14}, \eqref{4.15}, \eqref{4.16}, \eqref{4.17}, \eqref{4.18} and \eqref{4.19} into \eqref{4.15} leads to
\begin{align*}
    &\exp\left\{-C_n(1+\sqrt{\kappa}R)\right\}V^\frac{2}{n}R^{-2}\left\|f^{\frac{t_0+p}{2}}\right\|^2_{L^\frac{2n}{n-2}(\Omega_0)}\\
    \le&\left(\frac{4}{t_0}\right)^{\frac{t_0+p}{r-p+1}}\left(\mathcal{C}\kappa t_0+\frac{1}{R^2}\right)^{\frac{t_0+r+1}{r-p+1}}V+t_04^{\frac{t_0+q}{r-q+1}}\left(\mathcal{C}\kappa\right)^\frac{t_0+r+1}{r-q+1}V\\
    &+\mathcal{C}^\frac{t_0+r+1}{r-p+1}\mathcal{C}_1^\frac{2t_0+2r+2}{r-p+1}t_0^\frac{t_0+r+1}{r-p+1}\left(\frac{C}{R}\right)^\frac{2(t_0+r+1)}{r-p+1}V\\
    &+\mathcal{C}^\frac{t_0+r+1}{r-q+1}\mathcal{C}_1^\frac{2t_0+2r+2}{r-q+1}t_0^\frac{t_0+r+1}{r-q+1}\left(\frac{C}{R}\right)^\frac{2(t_0+r+1)}{r-q+1}V.
\end{align*}
Hence, we arrive at
\begin{align*}
    &\exp\left\{-C_n(1+\sqrt{\kappa}R)\right\}V^\frac{2}{n}R^{-2}\left\|f^{\frac{t_0+p}{2}}\right\|^2_{L^\frac{2n}{n-2}(\Omega_0)}\\
    \le&\mathcal{C}^{\frac{2t_0+r+p+1}{r-p+1}}t_0^{-\frac{t_0+p}{r-p+1}}\left(\kappa t_0+\frac{1}{R^2}\right)^{\frac{t_0+r+1}{r-p+1}}V+\mathcal{C}^{\frac{2t_0+r+q+1}{r-q+1}}t_0\kappa^\frac{t_0+r+1}{r-q+1}V\\
    &+\mathcal{C}^\frac{5(t_0+r+1)}{r-p+1}\left(\frac{t_0}{R^2}\right)^\frac{t_0+r+1}{r-p+1}V+\mathcal{C}^\frac{5(t_0+r+1)}{r-q+1}\left(\frac{t_0}{R^2}\right)^\frac{t_0+r+1}{r-q+1}V.
\end{align*}
By using the above inequality, we obtain
\begin{align*}
    \left(\int_{\Omega_0}f^{(t_0+p)\frac{n}{n-2}}\right)^{\frac{n-2}{n}}
    \le&\exp\left\{C_n(1+\sqrt{\kappa}R)\right\}V^\frac{n-2}{n}\bigg\{\mathcal{C}^{\frac{2t_0+r+p+1}{r-p+1}}t_0^{-\frac{t_0+p}{r-p+1}}\left(\kappa t_0+\frac{1}{R^2}\right)^{\frac{t_0+r+1}{r-p+1}}R^2\\
    &+\mathcal{C}^{\frac{2t_0+r+q+1}{r-q+1}}t_0\kappa^\frac{t_0+r+1}{r-q+1}R^2+\mathcal{C}^\frac{5(t_0+r+1)}{r-p+1}t_0^{\frac{t_0+r+1}{r-p+1}}
    \left(\frac{1}{R^2}\right)^\frac{t_0+p}{r-p+1}\\
    &+\mathcal{C}^\frac{5(t_0+r+1)}{r-q+1}t_0^{\frac{t_0+r+1}{r-q+1}}\left(\frac{1}{R^2}\right)^\frac{t_0+q}{r-q+1}\bigg\}.
\end{align*}
Taking power of $\frac{1}{t_0+p}$ of the both sides of the above inequality respectively, we obtain
\begin{align*}
\|f\|_{L^{\beta_1}(\Omega_0)}\le&\mathcal{C}\exp\left\{\frac{C_n(1+\sqrt{\kappa}R)}{t_0+p}\right\}V^{\frac{1}{\beta_1}}\bigg\{t_0^{-\frac{t_0+p}{r-p+1}}\left(\kappa t_0+\frac{1}{R^2}\right)^{\frac{t_0+r+1}{r-p+1}}R^2\\
&+t_0\kappa^\frac{t_0+r+1}{r-q+1}R^2+t_0^{\frac{t_0+r+1}{r-p+1}}\left(\frac{1}{R^2}\right)^\frac{t_0+p}{r-p+1}+t_0^{\frac{t_0+r+1}{r-q+1}}\left(\frac{1}{R^2}\right)^\frac{t_0+q}{r-q+1}\bigg\}^\frac{1}{t_0+p},
\end{align*}
where $\mathcal{C}=\mathcal{C}(n,p,q,\lambda,r)$ and $\beta_1=\frac{n(t_0+p)}{n-2}$.

By using the inequality $$(a_1+a_2+a_3+a_4)^b\le4^b(a_1^b+a_2^b+a_3^b+a_4^b)$$
$(a_i\ge0,~b>0)$, we have
\begin{align}\label{4.20}
\|f\|_{L^{\beta_1}(\Omega_0)}\le&\mathcal{C}\exp\left\{\frac{C_n(1+\sqrt{\kappa}R)}{t_0+p}\right\}V^{\frac{1}{\beta_1}}4^\frac{1}{t_0+p}\bigg\{t_0^{-\frac{1}{r-p+1}}\left(\kappa t_0R^2+1\right)^{\frac{t_0+r+1}{(r-p+1)(t_0+p)}}R^\frac{-2}{r-p+1}\nonumber\\
&+t_0^{\frac{1}{t_0+p}}\kappa^\frac{t_0+r+1}{(r-q+1)(t_0+p)}R^\frac{2}{t_0+p}+t_0^{\frac{t_0+r+1}{(r-p+1)(t_0+p)}}\left(\frac{1}{R^2}\right)^\frac{1}{r-p+1}\\
&+t_0^{\frac{t_0+r+1}{(r-q+1)(t_0+p)}}\left(\frac{1}{R^2}\right)^\frac{t_0+q}{(r-q+1)(t_0+p)}\bigg\}\nonumber\\
:=&\mathcal{C}\exp\left\{\frac{C_n(1+\sqrt{\kappa}R)}{t_0+p}\right\}V^{\frac{1}{\beta_1}}4^\frac{1}{t_0+p}(I_1+I_2+I_3+I_4).\nonumber
\end{align}

Let
\begin{align}\label{4.21}
\mathcal{C}_2(1+\kappa R^2)\le t_0\le\mathcal{C}_3(1+\kappa R^2),
\end{align}
where $\mathcal{C}_i=\mathcal{C}_i(n,p,q,\lambda,r)$ ($i=2,3$); and note that
\begin{align*}
\lim_{\omega\to+\infty}{\omega^\frac{1}{\omega}}=1,
\end{align*}
which will be used in the estimate of $I_i$.

For $I_1$, we have
\begin{align}\label{4.22}
   \begin{split}
        I_1=&t_0^{-\frac{1}{r-p+1}}\left(\kappa t_0R^2+1\right)^{\frac{t_0+r+1}{(r-p+1)(t_0+p)}}R^\frac{-2}{r-p+1}\\
    =&\left(\kappa+\frac{1}{t_0R^2}\right)^\frac{1}{r-p+1}(1+\kappa t_0R^2)^\frac{1}{t_0+p}\\
    \le&\mathcal{C}\left(\kappa+\frac{1}{R^2}\right)^\frac{1}{r-p+1}[t_0(1+\kappa R^2)]^\frac{1}{t_0+p}\\
    \le&\mathcal{C}\left(\frac{1+\kappa R^2}{R^2}\right)^\frac{1}{r-p+1}.
   \end{split}
\end{align}

For $I_2$, since $R>\delta>0$, we have
\begin{align}\label{4.23}
\begin{split}
I_2=&t_0^{\frac{1}{t_0+p}}\kappa^\frac{t_0+r+1}{(r-q+1)(t_0+p)}R^\frac{2}{t_0+p}\\
=&t_0^{\frac{1}{t_0+p}}\kappa^{\frac{1}{r-q+1}}R^{-\frac{2(q-p)}{(t_0+p)(r-q+1)}}\left(\kappa R^2\right)^{\frac{r-p+1}{(r-q+1)(t_0+p)}}\\
\le&t_0^{\frac{1}{t_0+p}}\kappa^{\frac{1}{r-q+1}}R^{-\frac{2(q-p)}{(t_0+p)(r-q+1)}}\left(1+\kappa R^2\right)^{\frac{r-p+1}{(r-q+1)(t_0+p)}}\\
\le&\mathcal{C}_0\kappa^{\frac{1}{r-q+1}}\\
\le&\mathcal{C}_0\left(\frac{1+\kappa R^2}{R^2}\right)^\frac{1}{r-q+1},
\end{split}
\end{align}
where $\mathcal{C}_0=\mathcal{C}_0(n,p,q,\lambda,r,\delta)$.

For $I_3$, we have
\begin{align}\label{4.24}
I_3=t_0^{\frac{t_0+r+1}{(r-p+1)(t_0+p)}}\left(\frac{1}{R^2}\right)^\frac{1}{r-p+1}=\left(\frac{t_0}{R^2}\right)^\frac{1}{r-p+1}t_0^\frac{1}{t_0+p}
\le\mathcal{C}\left(\frac{1+\kappa R^2}{R^2}\right)^\frac{1}{r-p+1}.
\end{align}

For $I_4$, since $R>\delta>0$, we have
\begin{align}\label{4.25}
\begin{split}
I_4=&t_0^{\frac{t_0+r+1}{(r-q+1)(t_0+p)}}\left(\frac{1}{R^2}\right)^\frac{t_0+q}{(r-q+1)(t_0+p)}\\
=&\left(\frac{t_0}{R^2}\right)^\frac{1}{r-q+1}\left(\frac{1}{R^2}\right)^{\frac{q-p}{(r-q+1)(t_0+p)}}t_0^{\frac{r-p+1}{(r-q+1)(t_0+p)}}\\
\le&\mathcal{C}_0\left(\frac{1+\kappa R^2}{R^2}\right)^\frac{1}{r-q+1}.
\end{split}
\end{align}

By using \eqref{4.21}, we have
\begin{align}\label{4.26}
\exp\left\{\frac{C_n(1+\sqrt{\kappa}R)}{t_0+p}\right\}4^\frac{1}{t_0+p}\le\mathcal{C}.
\end{align}
Substituting \eqref{4.22}, \eqref{4.23}, \eqref{4.24}, \eqref{4.25} and \eqref{4.26} into \eqref{4.20}, we arrive at
\begin{align*}
\|f\|_{L^{\beta_1}(\Omega_0)}\le\mathcal{C}_0\left[\left(\frac{1+\kappa R^2}{R^2}\right)^\frac{1}{r-p+1}+\left(\frac{1+\kappa R^2}{R^2}\right)^\frac{1}{r-q+1}\right]V^{\frac{1}{\beta_1}}.
\end{align*}
Therefore, we complete the proof of Lemma \ref{lemma4.4}.

\end{proof}

\subsection{Moser iteration for solutions of \eqref{1}}\

By using the integral inequality \eqref{4.12}, we can achieve the following lemma:
\begin{lemma}\label{lemma4.5}
Let $(M,~g)$ be a complete Riemannian manifold with $\mathrm{Ric}\ge-(n-1)\kappa$ ($\kappa\ge0$, $n\ge3$), and let $u$ be a solution of equation \eqref{1} in $\Omega=B(o,R)\subset M$. Assume that $h\in C^1(\mathbb{R}\times\mathbb{R}^+)$ satisfies one of the following two conditions
\begin{itemize}
\item $\frac{\partial h}{\partial x}(x,y)\le0$,\quad $\left(\frac{\partial h}{\partial y}(x,y)\right)^2\le\mu^2 y^{r-2}$\quad\mbox{and}\quad $h^2(x,y)\ge\lambda y^r$;

\item $\frac{\partial h}{\partial x}(x,y)\le-\lambda y^{r-\frac{q}{2}}$ \quad\mbox{and}\quad $\left(\frac{\partial h}{\partial y}(x,y)\right)^2\le\mu^2 y^{r-2}$,
    \end{itemize}
    where $r>q-1$, $\lambda>0$ and $\mu>0$. Set
    \begin{align*}
        \xi=\exp\left\{\frac{(n-2)C_n(1+\sqrt{\kappa}R)}{2(t_0+p)}\right\}V^{\frac{-1}{\beta_1}}\left(\kappa t_0R^2 +1\right)^\frac{n-2}{2(t_0+p)}\|f\|_{L^{\beta_{1}}(B(o,\frac{3R}{4}))},
    \end{align*}
    then
    \begin{align*}
        \|f\|_{L^{\infty}(B(o,sR))}\le\mathcal{C}\left(\xi+\xi^\frac{2(t_0+p)}{2(t_0+p)-(n-2)(q-p)}\right)\frac{1}{(\tau-s)^\frac{2(n-2)}{2(t_0+p)-(n-2)(q-p)}},
    \end{align*}
    where $0<s<\tau<\frac{1}{2}$ and $t_0$ is large enough.
\end{lemma}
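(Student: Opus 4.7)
The proof is a Moser iteration on $L^p$-norms of $f$, starting from the $L^{\beta_1}$-bound of Lemma \ref{lemma4.4} and terminating at $L^\infty$, combined with a Young-inequality bootstrap to close the self-referential bound caused by the super-linear $f^{t+q}$ term on the right of (4.12).

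Set $\sigma = n/(n-2)$ and $p_k = \beta_1\sigma^k$, so that $p_0 = \beta_1 = n(t_0+p)/(n-2)$. Fix $s < s' < 1/2$, take a decreasing sequence $s' = s_0 > s_1 > \cdots \to s$ with $s_k-s_{k+1}\simeq (s'-s)/2^{k+1}$, and cutoffs $\eta_k\in C_0^\infty(B_{s_kR})$ with $\eta_k\equiv 1$ on $B_{s_{k+1}R}$ and $|\nabla\eta_k|\lesssim 2^k/((s'-s)R)$. Applying (4.12) with $t = p_k-p$ and $\eta=\eta_k$, and dropping the nonnegative $\int f^{t+r+1}\eta^2$ term, yields a schematic inequality
\begin{align*}
V^{2/n}R^{-2}\|f\|_{L^{p_{k+1}}(B_{s_{k+1}R})}^{p_k}\le D_k\int_{B_{s_kR}}\bigl(f^{p_k}+f^{p_k+(q-p)}\bigr),
\end{align*}
with $D_k\lesssim e^{C_n(1+\sqrt\kappa R)}\bigl(p_k\kappa+4^k/((s'-s)R)^2\bigr)$. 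Let $M := \|f\|_{L^\infty(B_{s'R})}$, which is finite by the local $C^{1,\alpha}$-regularity of solutions to (1.1). Then $\int f^{p_k+(q-p)}\le M^{q-p}\int f^{p_k}$, so taking $p_k$-th roots gives the geometric recursion $\|f\|_{L^{p_{k+1}}(B_{s_{k+1}R})}\le\bigl(V^{-2/n}R^2 D_k(1+M^{q-p})\bigr)^{1/p_k}\|f\|_{L^{p_k}(B_{s_kR})}$. The crucial computation is $\sum_{k\ge 0}p_k^{-1}=(n-2)/(2(t_0+p))$, which arises because we start at $\beta_1$ rather than at $t_0+p$. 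Iterating and handling the product factors $\prod 4^{k/p_k}$, $\prod p_k^{1/p_k}$, $\prod(p_k\kappa R^2+1)^{1/p_k}$ (which absorb into the constants defining $\xi$) yields
\begin{align*}
\phi(s)\le \mathcal{C}\,\xi\,(s'-s)^{-(n-2)/(t_0+p)}\bigl(1+\phi(s')^{q-p}\bigr)^{(n-2)/(2(t_0+p))},
\end{align*}
where $\phi(\rho):=\|f\|_{L^\infty(B_{\rho R})}$.

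To close the bootstrap, use $(1+x)^\gamma\le 1+x^\gamma$ with $\gamma=(n-2)/(2(t_0+p))$ and then weighted Young $A\phi^\delta\le\tfrac12\phi+C'A^{1/(1-\delta)}$ with $\delta=(q-p)\gamma<1$ (valid because $t_0$ is chosen large enough that $(n-2)(q-p)<2(t_0+p)$); this gives
\begin{align*}
\phi(s)\le \tfrac12\phi(s')+\frac{C}{(s'-s)^{\alpha'}}\bigl(\xi+\xi^{1/(1-\delta)}\bigr),
\end{align*}
with $\alpha'=2(n-2)/(2(t_0+p)-(n-2)(q-p))$ and $1/(1-\delta)=2(t_0+p)/(2(t_0+p)-(n-2)(q-p))$. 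Finally, Lemma \ref{lemma2.2} with $\theta_0=1/2$ absorbs the $\phi(s')$-term and delivers the claimed estimate. The main obstacle is the self-referential appearance of $M$ in the iteration; its resolution hinges on (i) starting the iteration from $L^{\beta_1}$ rather than from $L^{t_0+p}$ (which is precisely why $n$ gets replaced by $n-2$ in the exponent $2(t_0+p)/(2(t_0+p)-(n-2)(q-p))$), and (ii) the absorption Lemma \ref{lemma2.2}, applicable exactly because $t_0$ is taken large enough to make $\delta<1$.
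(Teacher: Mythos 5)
Your proposal is correct and follows essentially the same route as the paper: Moser iteration on the exponents $\beta_m=\beta_1(n/(n-2))^{m-1}$ starting from the Sobolev-type inequality \eqref{4.12}, controlling the $f^{t+q}$ terms by pulling out $\|f\|_{L^\infty}^{q-p}$, computing $\sum_m\beta_m^{-1}=(n-2)/(2(t_0+p))$, and then closing the resulting self-referential bound with Young's inequality followed by the absorption Lemma \ref{lemma2.2}. The only cosmetic differences are the choice of radial decay in the cutoffs ($2^k$ versus the paper's $\alpha^m=(n/(n-2))^{m/2}$) and minor bookkeeping; the key exponents and the structure of the bootstrap agree.
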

\begin{proof}
By using \eqref{4.12}, we have
    \begin{align*}
            &\exp\left\{-C_n(1+\sqrt{\kappa}R)\right\}V^\frac{2}{n}R^{-2}\left\|f^{\frac{t+p}{2}}\eta\right\|^2_{L^\frac{2n}{n-2}(\Omega)}\\
        \le&\mathcal{C}\left(\kappa t +\frac{1}{R^2}\right)\int_{\Omega}f^{t+p}\eta^2+\mathcal{C}\int_\Omega f^{t+p}|\nabla \eta|^2+\mathcal{C}\kappa t \int_{\Omega}f^{t+q}\eta^2+\mathcal{C}\int_\Omega f^{t+q}|\nabla \eta|^2.
    \end{align*}

Now, we denote $$r_m=sR+\frac{\tau-s}{\alpha^{m-1}}R$$ and $\Omega_m=B(o,r_m)$, where $\alpha=\sqrt{\frac{n}{n-2}}$ and $0<s<\tau<\frac{1}{2}$; and then choose $\eta_m\in C_0^\infty(\Omega_m)$ satisfying
\begin{align*}
\begin{cases}
0\le\eta_m\le1,\quad\eta_m\equiv1\quad in~\Omega_{m+1};\\
|\nabla\eta_m|\le\frac{C\alpha^m}{(\tau-s)R}.
\end{cases}
\end{align*}
Substituting $\eta$ by $\eta_m$ in the above integral inequality, we can easily verify that
\begin{align*}
&\exp\left\{-C_n(1+\sqrt{\kappa}R)\right\}V^\frac{2}{n}R^{-2}\left\|f^{\frac{t+p}{2}}\right\|^2_{L^\frac{2n}{n-2}(\Omega_{m+1})}\\
\le&\mathcal{C}\left(\kappa t +\frac{1}{R^2}\right)\int_{\Omega_m}f^{t+p}+\frac{\mathcal{C}\alpha^{2m}}{(\tau-s)^2R^2}\int_{\Omega_m} f^{t+p}+\mathcal{C}\kappa t \int_{\Omega_m}f^{t+q}+\frac{\mathcal{C}\alpha^{2m}}{(\tau-s)^2R^2}\mathcal{C}\int_{\Omega_m} f^{t+q}\\
\le&\mathcal{C}\left(\kappa t +\frac{1}{R^2}+\frac{\alpha^{2m}}{(\tau-s)^2R^2}\right)\int_{\Omega_m}(f^{t+p}+f^{t+q})\\
\le&\mathcal{C}\left(\kappa t +\frac{1}{R^2}+\frac{\alpha^{2m}}{(\tau-s)^2R^2}\right)\left(1+\|f\|^{q-p}_{L^\infty(B(o,\tau R))}\right)\int_{\Omega_m}f^{t+p}.
\end{align*}

Next, we choose
$$\beta_1=\frac{n(t_0+p)}{n-2}\quad \mbox{and}\quad \beta_{m+1}=\frac{n\beta_m}{n-2},$$
and let $t=t_m$ such that $t_m+p=\beta_m$. Then it follows from the above integral inequality that
\begin{align*}
&\exp\left\{-C_n(1+\sqrt{\kappa}R)\right\}V^\frac{2}{n}\left(\int_{\Omega_{m+1}}f^{\beta_{m+1}}\right)^{\frac{n-2}{n}}\\
\le&\mathcal{C}\left(\kappa t_mR^2+1+\frac{\alpha^{2m}}{(\tau-s)^2}\right)\left(1+\|f\|^{q-p}_{L^\infty(B(o,\tau R))}\right)\int_{\Omega_m}f^{\beta_m}.
\end{align*}
Taking power of $\frac{1}{\beta_m}$ on the both sides of the above inequality, we obtain
\begin{align*}
    &\exp\left\{-\frac{C_n(1+\sqrt{\kappa}R)}{\beta_m}\right\}V^\frac{2}{n\beta_m}\|f\|_{L^{\beta_{m+1}}(\Omega_{m+1})}\\
    \le&\mathcal{C}^\frac{1}{\beta_m}\left(\kappa t_mR^2 +1+\frac{\alpha^{2m}}{(\tau-s)^2}\right)^\frac{1}{\beta_m}\left(1+\|f\|^{q-p}_{L^\infty(B(o,\tau R))}\right)^\frac{1}{\beta_m}\|f\|_{L^{\beta_{m}}(\Omega_{m})}.
\end{align*}
Hence, we arrive at
\begin{align*}
\|f\|_{L^{\beta_{m+1}}(\Omega_{m+1})}\le&\exp\left\{\frac{C_n(1+\sqrt{\kappa}R)}{\beta_m}\right\}V^\frac{-2}{n\beta_m}\mathcal{C}^\frac{1}{\beta_m}
\left(\kappa t_mR^2 +1+\frac{\alpha^{2m}}{(\tau-s)^2}\right)^\frac{1}{\beta_m}\\
&\cdot\left(1+\|f\|^{q-p}_{L^\infty(B(o,\tau R))}\right)^\frac{1}{\beta_m}\|f\|_{L^{\beta_{m}}(\Omega_{m})}\\
\le&\exp\left\{\frac{C_n(1+\sqrt{\kappa}R)}{\beta_m}\right\}V^\frac{-2}{n\beta_m}\mathcal{C}^\frac{1}{\beta_m}\left(\kappa t_0R^2 +\frac{1}{(\tau-s)^2}\right)^\frac{1}{\beta_m}\\
&\cdot\left(\frac{n}{n-2}\right)^\frac{m}{\beta_m}\left(1+\|f\|^{q-p}_{L^\infty(B(o,\tau R))}\right)^\frac{1}{\beta_m}\|f\|_{L^{\beta_{m}}(\Omega_{m})}.
\end{align*}
Noting
\begin{align*}
0<s<\tau<\frac{1}{2},\quad\sum_{m=1}^\infty{\frac{1}{\beta_m}}=\frac{n}{2\beta_1}\quad \mbox{and}\quad\sum_{m=1}^\infty{\frac{m}{\beta_m}}=\frac{n^2}{4\beta_1},
\end{align*}
we have
\begin{align*}
\|f\|_{L^{\infty}(B(o,sR))}\le&\mathcal{C}^{\frac{n-2}{2(t_0+p)}}\exp\left\{\frac{(n-2)C_n(1+\sqrt{\kappa}R)}{2(t_0+p)}\right\}V^{\frac{-1}{\beta_1}}\left(\frac{n}{n-2}\right)^\frac{n^2}{4\beta_1}\\
&\cdot\left(\kappa t_0R^2 +\frac{1}{(\tau-s)^2}\right)^\frac{n}{2\beta_1}\left(1+\|f\|^{q-p}_{L^\infty(B(o,\tau R))}\right)^\frac{n}{2\beta_1}\|f\|_{L^{\beta_{1}}(B(o,\tau R))}.
\end{align*}
Hence, we arrive at
\begin{align*}
\|f\|_{L^{\infty}(B(o,sR))}\le&\mathcal{C}\exp\left\{\frac{(n-2)C_n(1+\sqrt{\kappa}R)}{2(t_0+p)}\right\}V^{\frac{-1}{\beta_1}}\left(\kappa t_0R^2 +1\right)^\frac{n-2}{2(t_0+p)}\\
&\cdot\left(1+\|f\|^\frac{(n-2)(q-p)}{2(t_0+p)}_{L^\infty(B(o,\tau R))}\right)\|f\|_{L^{\beta_{1}}(B(o,\tau R))}\frac{1}{(\tau-s)^\frac{n-2}{t_0+p}}.
\end{align*}

Set
\begin{align*}
\xi=\exp\left\{\frac{(n-2)C_n(1+\sqrt{\kappa}R)}{2(t_0+p)}\right\}V^{\frac{-1}{\beta_1}}\left(\kappa t_0R^2 +1\right)^\frac{n-2}{2(t_0+p)}\|f\|_{L^{\beta_{1}}(B(o,\frac{3R}{4}))},
\end{align*}
we have
\begin{align*}
\|f\|_{L^{\infty}(B(o,sR))}\le\mathcal{C}\xi\frac{1}{(\tau-s)^\frac{n-2}{t_0+p}}+\mathcal{C}\xi
\frac{\|f\|^\frac{(n-2)(q-p)}{2(t_0+p)}_{L^\infty(B(o,\tau R))}}{(\tau-s)^\frac{n-2}{t_0+p}}.
\end{align*}
By applying Young's inequality for the second integral on the right-hand side of the above inequality under $p\neq q$, we readily obtain
\begin{align*}
\|f\|_{L^{\infty}(B(o,sR))}\le&\frac{1}{2}\|f\|_{L^{\infty}(B(o,\tau R))}+\mathcal{C}\xi\frac{1}{(\tau-s)^\frac{n-2}{t_0+p}}+\left[\frac{(n-2)(q-p)}{t_0+p}\right]^{\frac{(n-2)(q-p)}{2(t_0+p)-(n-2)(q-p)}}\\
&\cdot\frac{2(t_0+p)-(n-2)(q-p)}{2(t_0+p)}(\mathcal{C}\xi)^{\frac{2(t_0+p)}{2(t_0+p)-(n-2)(q-p)}}\frac{1}{(\tau-s)^\frac{2(n-2)}{2(t_0+p)-(n-2)(q-p)}}\\
\le&\frac{1}{2}\|f\|_{L^{\infty}(B(o,\tau R))}+\mathcal{C} \left(\xi+\xi^\frac{2(t_0+p)}{2(t_0+p)-(n-2)(q-p)}\right)\frac{1}{(\tau-s)^\frac{2(n-2)}{2(t_0+p)-(n-2)(q-p)}}.
\end{align*}
Furthermore, it is worth to mention that the above inequality is also true under $p=q$. Setting $$\psi_0(y)=\left\|f\right\|_{L^\infty\left(B\left(o,yR\right)\right)},$$
we obtain
\begin{align*}
\psi_0(s)\le\frac{1}{2}\psi_0(\tau)+\mathcal{C}\left(\xi+\xi^\frac{2(t_0+p)}{2(t_0+p)-(n-2)(q-p)}\right)
\frac{1}{(\tau-s)^\frac{2(n-2)}{2(t_0+p)-(n-2)(q-p)}}.
\end{align*}
By using Lemma \ref{lemma2.2} and noticing that
\begin{align*}
\lim_{t_0\to+\infty}\frac{2(n-2)}{2(t_0+p)-(n-2)(q-p)}=0,
\end{align*}
we finish the proof of Lemma \ref{lemma4.5}.

\end{proof}

\subsection{Proof of Theorem \ref{theorem1.2}}
\begin{proof}
By using Lemma \ref{lemma4.5} and choosing $s=\frac{1}{4}$ and $\tau=\frac{3}{8}$, we have
\begin{align}\label{4.27}
\|f\|_{L^{\infty}(B(o,\frac{R}{4}))}\le\mathcal{C}\left(\xi+\xi^\frac{2(t_0+p)}{2(t_0+p)-(n-2)(q-p)}\right),
\end{align}
where
\begin{align*}
\xi=\exp\left\{\frac{(n-2)C_n(1+\sqrt{\kappa}R)}{2(t_0+p)}\right\}V^{\frac{-1}{\beta_1}}\left(\kappa t_0R^2 +1\right)^\frac{n-2}{2(t_0+p)}\|f\|_{L^{\beta_{1}}(B(o,\frac{3R}{4}))}
\end{align*}
and $t_0$ is large enough. Let
\begin{align*}
\mathcal{C}_2(1+\kappa R^2)\le t_0\le\mathcal{C}_3(1+\kappa R^2),
\end{align*}
we can derive that
\begin{align*}
\xi\le\mathcal{C}\|f\|_{L^{\beta_{1}}(B(o,\frac{3R}{4}))}V^{\frac{-1}{\beta_1}}.
\end{align*}
By using Lemma \ref{lemma4.4}, we arrive at
\begin{align}\label{4.28}
\xi\le\mathcal{C}_0\left[\left(\frac{1+\kappa R^2}{R^2}\right)^\frac{1}{r-p+1}+\left(\frac{1+\kappa R^2}{R^2}\right)^\frac{1}{r-q+1}\right],
\end{align}
where $\mathcal{C}_0=\mathcal{C}_0(n,p,q,\lambda,r,\delta)$ and $R\ge\delta$.

For any $\theta>1$, we can choose $\mathcal{C}_2$ large enough, such that
\begin{align*}
\frac{2(t_0+p)}{2(t_0+p)-(n-2)(q-p)}<\theta.
\end{align*}
Noting
$$1\le\frac{2(t_0+p)}{2(t_0+p)-(n-2)(q-p)}<\theta$$
and combining \eqref{4.27} and \eqref{4.28} together, we can achieve that
\begin{align*}
\|f\|_{L^{\infty}(B(o,\frac{R}{4}))}\le\widetilde{\mathcal{C}}\left[\left(\frac{1+\kappa R^2}{R^2}\right)^\frac{1}{r-p+1}+\left(\frac{1+\kappa R^2}{R^2}\right)^\frac{\theta}{r-q+1}\right]
\end{align*}
where $\widetilde{\mathcal{C}}=\widetilde{\mathcal{C}}(n,p,q,\lambda,r,\delta,\theta)$. Hence, we complete the proof of Theorem \ref{theorem1.1}.
\end{proof}

\section{\textbf{Some further applications}}

Next, we give several examples by using the above theorems.
\medskip

\noindent{\bf Example 1.} If we let $h(u,|\nabla u|^2)=au^s|\nabla u|^r$ in \eqref{1}, then \eqref{1} is of the following form:
\begin{align}\label{5.1}
\Delta_pu+\Delta_qu+au^s|\nabla u|^r=0,
\end{align}
where $s$ and $r$ are two real numbers. For this equation, we can take the same arguments as in Section 4 to conclude the following:

\begin{theorem} Let $M$ ($\dim(M)\ge3$) be a complete non-compact Riemannian manifold with non-negative Ricci curvature. Assume that $r>q-1\geq p-1>0$, $as\le0$ and $a\neq0$ in \eqref{5.1}. If $u$ is a positive solution to equation \eqref{5.1} on $M$, which satisfies
\begin{enumerate}
\item $u$ is bounded on $M$,

\item $u$ has a positive lower bound on $M$,
\end{enumerate}
then $u$ is a trivial constant solution.
\end{theorem}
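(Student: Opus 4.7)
The plan is to apply Theorem \ref{theorem1.2} in the pointwise form noted in Remark~1(3) to the function $h(x,y) = a\, x^s y^{r/2}$ corresponding to the reaction term $a u^s |\nabla u|^r$, and then let $R\to\infty$ to conclude that $|\nabla u|\equiv 0$. The two hypotheses on $u$ (positive upper bound and positive lower bound) will enter only through the verification that condition~(1) of Theorem~\ref{theorem1.2} holds along $u$ with constants independent of $R$.

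First I would verify condition~(1). Direct computation gives $\partial_x h = a s\, x^{s-1} y^{r/2}$, which is non-positive on $\{x>0\}$ since $as\le 0$; next $\partial_y h = \tfrac{a r}{2} x^{s} y^{r/2-1}$, so $(\partial_y h)^2 = \tfrac{a^2 r^2}{4}\, x^{2s} y^{r-2}$; and $h^2 = a^2 x^{2s} y^r$. The key observation is that, by assumptions (1) and (2), there exist constants $0 < u_0 \le u(x)\le U_0$ for all $x\in M$, so $u(x)^{2s}$ is pinched between two positive constants depending only on $u_0,U_0,s$ (regardless of the sign of $s$). Since $a\neq 0$, we may then choose $\mu>0$ and $\lambda>0$ (independent of $R$) so that along the solution $u$ the triple of inequalities $\partial_x h \le 0$, $(\partial_y h)^2 \le \mu^2 y^{r-2}$ and $h^2\ge \lambda y^r$ holds pointwise on $M$.

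With $\lambda,\mu$ fixed, Theorem \ref{theorem1.2} applied on $B(o,R)\subset M$ with $\kappa=0$ yields, for any $\theta>1$ and all $R$ sufficiently large,
\begin{align*}
\sup_{B_{R/4}(o)}|\nabla u|^2 \le \widetilde{\mathcal{C}}\left[ R^{-\frac{2}{r-p+1}} + R^{-\frac{2\theta}{r-q+1}}\right],
\end{align*}
with $\widetilde{\mathcal{C}}$ independent of $R$. Since $r>q-1\ge p-1>0$, both exponents of $R$ are strictly negative, so letting $R\to\infty$ forces $|\nabla u|\equiv 0$ on $M$, whence $u$ is constant. The only real obstacle is the careful bookkeeping needed to show that $\lambda$, $\mu$, and hence $\widetilde{\mathcal{C}}$, are genuinely independent of $R$; this is exactly where the assumption that $u$ is globally bounded above and globally bounded below by a positive constant is indispensable, for if either bound failed, $u^{2s}$ could degenerate as $R$ grows and the constants would blow up.
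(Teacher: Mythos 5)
Your proof is correct and follows essentially the same route as the paper: you compute the same partial derivatives of $h(x,y)=ax^sy^{r/2}$, use the global two-sided bounds $0<l_1\le u\le l_2$ to pinch $u^{2s}$ and thereby fix $\lambda,\mu>0$ independent of $R$, and then invoke the gradient estimate (via Remark~1(3)) with $\kappa=0$ and let $R\to\infty$. The paper simply cites Corollary~\ref{corollary1.3} (which is exactly Theorem~\ref{theorem1.2} followed by $R\to\infty$) and records the explicit choices $\mu=\frac{|ar|}{2}\max\{l_1^s,l_2^s\}$, $\lambda=a^2\min\{l_1^{2s},l_2^{2s}\}$, but the content is identical.
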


\begin{proof}
Using the assumptions in $u$, we can see that there exists $l_1,l_2>0$, such that
\begin{align*}
l_1\le u\le l_2.
\end{align*}
Since $h(u,|\nabla u|^2)=au^s|\nabla u|^r$, we can see that
\begin{align*}
\dfrac{\partial h}{\partial x}(x,y)\bigg|_{(u,|\nabla u|^2)}=asu^{s-1}|\nabla u|^r\quad and \quad \dfrac{\partial h}{\partial y}(x,y)\bigg|_{(u,|\nabla u|^2)}=\frac{ar}{2}u^s|\nabla u|^{r-2}.
\end{align*}
Choosing $\mu=\frac{|ar|}{2}\max\{l_1^s,~l_2^s\}$ and $\lambda=a^2\min\{l_1^{2s},~l_2^{2s}\}$, and using $as\le0$, we obtain the following
\begin{align*}
&\dfrac{\partial h}{\partial x}(x,y)\bigg|_{(u,|\nabla u|^2)}\le0,\\
&\left(\frac{\partial h}{\partial y}(x,y)\right)^2\bigg|_{(u,|\nabla u|^2)}=\frac{a^2r^2}{4}x^{2s}y^{r-2}\bigg|_{(u,|\nabla u|^2)}\le\mu^2 y^{r-2}\bigg|_{|\nabla u|^2},\\
&h^2(x,y)\bigg|_{(u,|\nabla u|^2)}=a^2x^{2s}y^{r}\bigg|_{(u,|\nabla u|^2)}\ge\lambda y^r\bigg|_{|\nabla u|^2}.
\end{align*}
Therefore, using Corollary \ref{corollary1.3} and (3) of Remark 1, we finish the proof.
\end{proof}
\medskip

\noindent{\bf Example 2.} In fact, one may also consider the following equation:
\begin{align}\label{5.2}
\Delta_pu+\Delta_qu-u=0,
\end{align}
which can be obtained by letting $h(u,|\nabla u|^2)=-u$ in \eqref{1}.

\begin{theorem}
Let $1<p\le q<2$ and $M$ ($\dim(M)\ge3$) is a complete non-compact Riemannian manifold with non-negative Ricci curvature. If $u$ is a solution to equation \eqref{5.2} on $M$, then $u\equiv0$.
\end{theorem}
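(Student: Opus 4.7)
The plan is to apply Corollary~\ref{corollary1.3} with $h(x,y)=-x$, which identifies \eqref{5.2} as a special case of \eqref{1}. We have $\partial h/\partial x(x,y)=-1$ and $\partial h/\partial y(x,y)=0$, so the lower bound $h^2(x,y)\ge\lambda y^r$ demanded by condition~(1) cannot be met (the left side depends on $x$, the right only on $y$). Therefore I target condition~(2). Its second clause $(\partial h/\partial y)^2\le\mu^2 y^{r-2}$ is trivially satisfied for any $\mu>0$ since the left side is zero. Its first clause $\partial h/\partial x\le-\lambda y^{r-q/2}$ reads $-1\le-\lambda y^{r-q/2}$, i.e.\ $\lambda y^{r-q/2}\le 1$; the natural choice $r=q/2$ cancels the $y$-dependence and reduces the requirement to $\lambda\le 1$, which is realised by taking $\lambda=1$.

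The admissibility condition $r>q-1\ge p-1>0$ then becomes $q/2>q-1$, equivalently $q<2$, which is precisely the standing hypothesis of the theorem; the inequality $q-1\ge p-1>0$ follows from $1<p\le q$. Appealing to Remark~1(3), it is enough that the two conditions hold pointwise at $(u(x),|\nabla u(x)|^2)$, and in the present case they hold globally on $\mathbb{R}\times\mathbb{R}^+$ (one as an identity, the other trivially). Consequently the hypotheses of Corollary~\ref{corollary1.3} are in force and $u$ must be a constant on $M$.

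Finally, inserting this constant into \eqref{5.2} makes both $\Delta_p u$ and $\Delta_q u$ vanish, leaving $-u=0$, hence $u\equiv 0$. The only genuine obstacle is the small algebraic observation that $r=q/2$ is compatible with the admissibility range $r>q-1$ if and only if $q<2$; the strictness of this upper bound is therefore not incidental but exactly the mechanism that makes the approach work. If $q\ge 2$ were allowed, no $r$ would simultaneously satisfy $r>q-1$ and kill the $y$-dependence of the pointwise lower bound on $-\partial h/\partial x$, and Corollary~\ref{corollary1.3} could not be invoked in this clean form for $h(x,y)=-x$.
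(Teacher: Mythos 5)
Your proposal is correct and follows essentially the same route as the paper: you identify $h(x,y)=-x$, note that condition (2) of Corollary~\ref{corollary1.3} is the relevant one, choose $r=q/2$ and $\lambda=1$ (the paper also takes $\mu=1$), observe that the admissibility constraint $r>q-1$ is exactly the hypothesis $q<2$, and invoke Remark~1(3). You also spell out the final elementary step (a constant solving \eqref{5.2} must be $0$), which the paper leaves implicit; otherwise the arguments coincide.
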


\begin{proof}
Since $h(u,|\nabla u|^2)=-u$, we can see that
\begin{align*}
\dfrac{\partial h}{\partial x}(x,y)\bigg|_{(u,|\nabla u|^2)}=-1\quad and \quad \dfrac{\partial h}{\partial y}(x,y)\bigg|_{(u,|\nabla u|^2)}=0.
\end{align*}
Choosing $r=\frac{q}{2}$, $\lambda=1$ and $\mu=1$, we obtain the following
\begin{align*}
\frac{\partial h}{\partial x}(x,y)\bigg|_{(u,|\nabla u|^2)}=-1\le-1\times y^0\bigg|_{|\nabla u|^2}=-\lambda y^{r-\frac{q}{2}}\bigg|_{|\nabla u|^2},
\end{align*}
\begin{align*}
\left(\frac{\partial h}{\partial y}(x,y)\right)^2\bigg|_{(u,|\nabla u|^2)}=0\le1\times y^{\frac{q}{2}-2}\bigg|_{|\nabla u|^2}=\mu^2 y^{r-2}\bigg|_{|\nabla u|^2}.
\end{align*}
Therefore, using Corollary \ref{corollary1.3} and (3) of Remark 1, we finish the proof.
\end{proof}
\medskip

\noindent {\it\bf{Acknowledgements}}: The authors are supported by National Natural Science Foundation of China (Grant No. 12431003).
\medskip

\noindent {\it\bf{Conflict of interest statement}}: The authors declare that there are no conflict of interests.

\medskip
\noindent {\it\bf{Data availability statement}}: No data was used in the research described in this paper.
\medskip
\medskip

\label{appendixB}
\bibliographystyle{plain}

\end{document}